\newcommand\@dotsep{4.5}
\def\@tocline#1#2#3#4#5#6#7{\relax
	\ifnum #1>\c@tocdepth 
	\else
	\par \addpenalty\@secpenalty\addvspace{#2}%
	\begingroup \hyphenpenalty\@M
	\@ifempty{#4}{%
		\@tempdima\csname r@tocindent\number#1\endcsname\relax
	}{%
		\@tempdima#4\relax
	}%
	\parindent\z@ \leftskip#3\relax \advance\leftskip\@tempdima\relax
	\rightskip\@pnumwidth plus1em \parfillskip-\@pnumwidth
	#5\leavevmode\hskip-\@tempdima #6\relax
	\leaders\hbox{$\m@th
		\mkern \@dotsep mu\hbox{.}\mkern \@dotsep mu$}\hfill
	\hbox to\@pnumwidth{\@tocpagenum{#7}}\par
	\nobreak
	\endgroup
	\fi}
\begin{document}
	
	
	\makeatletter
	\@addtoreset{figure}{section}
	\def\thefigure{\thesection.\@arabic\c@figure}
	\def\fps@figure{h,t}
	\@addtoreset{table}{bsection}
	
	\def\thetable{\thesection.\@arabic\c@table}
	\def\fps@table{h, t}
	\@addtoreset{equation}{section}
	\def\theequation{
		\arabic{equation}}
	\makeatother
	
	\newcommand{\bfi}{\bfseries\itshape}

	\newtheorem{theorem}{Theorem}[section]
	\newtheorem{corollary}[theorem]{Corollary}
	\newtheorem{definition}[theorem]{Definition}
	\newtheorem{example}[theorem]{Example}
	\newtheorem{examples}[theorem]{Examples}
	\newtheorem{lemma}[theorem]{Lemma}
	\newtheorem{notation}[theorem]{Notation}
	\newtheorem{problem}[theorem]{Problem}
	\newtheorem{proposition}[theorem]{Proposition}
	\newtheorem{remark}[theorem]{Remark}
	\numberwithin{equation}{section}

	\renewcommand{\1}{{\bf 1}}
	\newcommand{\Ad}{{\rm Ad}}
	\newcommand{\Aut}{{\rm Aut}\,}
	\newcommand{\ad}{{\rm ad}}
	\newcommand{\card}{{\rm card}}
	\newcommand{\Ci}{{\mathcal C}^\infty}
	\newcommand{\Der}{{\rm Der}}
	\newcommand{\de}{{\rm d}}
	\newcommand{\ee}{{\rm e}}
	\newcommand{\End}{{\rm End}\,}
	\newcommand{\ev}{{\rm ev}}
	\newcommand{\GL}{{\rm GL}}
	\newcommand{\Gr}{{\rm Gr}}
	\newcommand{\graf}{{\mathsf{G}}}
	\newcommand{\Hom}{{\rm Hom}}
	\newcommand{\hotimes}{\widehat{\otimes}}
	\newcommand{\id}{{\rm id}}
	\newcommand{\ie}{{\rm i}}
	\newcommand{\gl}{{{\mathfrak g}{\mathfrak l}}}
	\newcommand{\Ker}{{\rm Ker}\,}
	\newcommand{\LD}{\pounds}
	\newcommand{\Lie}{\text{\bf L}}
	\newcommand{\pr}{{\rm pr}}
	\newcommand{\Ran}{{\rm Ran}\,}
	\renewcommand{\Re}{{\rm Re}\,}
	\newcommand{\spa}{{\rm span}\,}
	\newcommand{\Tr}{{\rm Tr}\,}
	\newcommand{\U}{{\rm U}}
	
	\newcommand{\CC}{{\mathbb C}}
	\newcommand{\HH}{{\mathbb H}}
	\newcommand{\RR}{{\mathbb R}}
	\newcommand{\TT}{{\mathbb T}}
	
	\newcommand{\Ac}{{\mathcal A}}
	\newcommand{\Bc}{{\mathcal B}}
	\newcommand{\Cc}{{\mathcal C}}
	\newcommand{\Dc}{{\mathcal D}}
	\newcommand{\Ec}{{\mathcal E}}
	\newcommand{\Fc}{{\mathcal F}}
	\newcommand{\Gc}{{\mathcal G}}
	\newcommand{\Hc}{{\mathcal H}}
	\newcommand{\Jc}{{\mathcal J}}
	\newcommand{\Kc}{{\mathcal K}}
	\renewcommand{\Mc}{{\mathcal M}}
	\newcommand{\Nc}{{\mathcal N}}
	\newcommand{\Oc}{{\mathcal O}}
	\newcommand{\Pc}{{\mathcal P}}
	\newcommand{\Rc}{{\mathcal R}}
	\newcommand{\Sc}{{\mathcal S}}
	\newcommand{\Tc}{{\mathcal T}}
	\newcommand{\Vc}{{\mathcal V}}
	\newcommand{\Uc}{{\mathcal U}}
	\newcommand{\Xc}{{\mathcal X}}
	\newcommand{\Yc}{{\mathcal Y}}
	\newcommand{\Zc}{{\mathcal Z}}
	\newcommand{\Wc}{{\mathcal W}}

	\newcommand{\Ag}{{\mathfrak A}}
	\newcommand{\Bg}{{\mathfrak B}}
	\newcommand{\Fg}{{\mathfrak F}}
	\newcommand{\Gg}{{\mathfrak G}}
	\newcommand{\Ig}{{\mathfrak I}}
	\newcommand{\Jg}{{\mathfrak J}}
	\newcommand{\Lg}{{\mathfrak L}}
	\newcommand{\Mg}{{\mathfrak M}}
	\newcommand{\Pg}{{\mathfrak P}}
	\newcommand{\Sg}{{\mathfrak S}}
	\newcommand{\Xg}{{\mathfrak X}}
	\newcommand{\Yg}{{\mathfrak Y}}
	\newcommand{\Zg}{{\mathfrak Z}}
	
	\newcommand{\ag}{{\mathfrak a}}
	\newcommand{\bg}{{\mathfrak b}}
	\newcommand{\dg}{{\mathfrak d}}
	\renewcommand{\gg}{{\mathfrak g}}
	\newcommand{\hg}{{\mathfrak h}}
	\newcommand{\kg}{{\mathfrak k}}
	\newcommand{\mg}{{\mathfrak m}}
	\newcommand{\n}{{\mathfrak n}}
	\newcommand{\og}{{\mathfrak o}}
	\newcommand{\pg}{{\mathfrak p}}
	\newcommand{\sg}{{\mathfrak s}}
	\newcommand{\tg}{{\mathfrak t}}
	\newcommand{\ug}{{\mathfrak u}}
	\newcommand{\zg}{{\mathfrak z}}
	
	\newcommand{\bs}{\mathbf{s}}
	\newcommand{\bt}{\mathbf{t}}
	
	\newcommand{\ZZ}{\mathbb Z}
	\newcommand{\NN}{\mathbb N}
	\newcommand{\BB}{\mathbb B}
	\newcommand{\EE}{\mathbb E}
	\newcommand{\FF}{\mathbb F}
	\newcommand{\KK}{\mathbb K}
	\newcommand{\QQ}{\mathbb Q}

	\newcommand{\tto}{\rightrightarrows}
	
\newcommand{\todo}[1]{\vspace{5 mm}\par \noindent
\framebox{\begin{minipage}[c]{0.95 \textwidth}
\tt #1 \end{minipage}}\vspace{5 mm}\par}

	\title{Cyclic Lie-Rinehart algebras} 
	\author[D. Belti\c t\u a, A. Dobrogowska, G. Jakimowicz]{Daniel Belti\c t\u a, Alina Dobrogowska, Grzegorz Jakimowicz} 
	
	\address{Institute of Mathematics ``Simion Stoilow'' of the Romanian Academy, P.O. BoS 1-764, Bucharest, Romania}
	\email{Daniel.Beltita@imar.ro, beltita@gmail.com}

\address{Faculty of Mathematics, University of Bia\l{}ystok, Cio\l{}kowskiego 1M, 15-245 Białystok, Poland}
\email{alina.dobrogowska@uwb.edu.pl} 

\address{Faculty of Mathematics, University of Bia\l{}ystok, Cio\l{}kowskiego 1M, 15-245 Białystok, Poland}
\email{g.jakimowicz@uwb.edu.pl}

	
	
\keywords{Lie algebroid, covariant differential operator, projective module}

\subjclass[2020]{Primary 17B80; Secondary 58H05}
	
	\begin{abstract} 
We study Lie-Rinehart algebra structures in the framework provided by a duality pairing of modules over a unital commutative associative algebra. 
Thus, we construct  examples of Lie brackets corresponding to a fixed anchor map whose image is a cyclic submodule of the derivation module, and therefore we call them cyclic Lie-Rinehart algebras. 
In a very special case of our construction, these brackets turn out to be related to certain differential operators that occur in mathematical physics. 
	\end{abstract}
	
	\maketitle
	
	
	\section{Introduction}
	
	The notion of Lie-Rinehart algebra is the algebraic counterpart of the differential geometric notion of Lie algebroid, which in turn is a generalization of Lie algebras. 
	While the classification of Lie brackets on low-dimensional vector spaces 
	is a classical topic, the analogous problem for Lie-Rinehart algebras is much more difficult. 
	Yet, their deformation theory as well as classification in low dimensions have been recently discussed, e.g., in \cite{Re20} and \cite{BCEM20}. 
	See also \cite[\S 8]{KeWa15} for  earlier results in deformation theory of Lie-Rinehart algebras.
	
	On the other hand, the Lie-Rinehart algebras associated to Lie algebroids are usually infinite-dimensional vector spaces, hence the simplest ones should be small in a different sense than that of the dimension. 
	Starting from that idea, in the present paper we focus on the class of Lie-Rinehart algebras whose rank of the anchor map is equal to 1, 
	that is, the ones corresponding to a fixed anchor map whose image is a cyclic submodule of the derivation module, and therefore we call them \emph{cyclic Lie-Rinehart algebras}. 
	
	Our new constructions of cyclic Lie-Rinehart algebras are performed within the class of pre-Lie-Rinehart algebras, which in turn are a special case of Lie-admissible algebras, 
	cf. e.g., \cite{Re21} and the references therein. 
	See also \cite{LShBCh16} for the differential geometric version of this notion, 
	namely the left-symmetric algebroid. 
	In order to explain the initial motivation of our constructions, 
	we recall from \cite{Ri63}  some purely algebraic constructions associated to the $\KK$-algebra $R$, which are suggested by basic differential geometry on a smooth manifold~$M$, as summmarized in the following table: 
\begin{center}
	\begin{tabular}{ |m{6cm}  | m{2cm}| m{3cm} | } 
		\hline
		ground field	&	$\RR$ & $\KK$ \\ 
		\hline
		algebra of smooth functions	&	$\Ci(M,\RR)$ & $R$ \\ 
		\hline
		Lie algebra of tangent vector fields	&	$\Gamma(TM)$ &  $ \Der(R)$ \\ 
		\hline
		space of differential 1-forms	&	$\Omega^1(M)$ & $\Hom_R( \Der(R),R)$\\ 
		\hline
	\end{tabular}
\end{center}	
Here and throughout the present paper, $\KK$ is a field of characteristic zero and $R$ is a unital commutative associative $\KK$-algebra. 
In the above table, only the space of vector fields has the natural structure of a Lie algebra, while this is not the case for the spaces of smooth functions and differential 1-forms. 
However, these last two spaces can be endowed with Lie brackets using 
additional geometric data such as the Poisson structures or the Dirac structures. 
(See e.g., \cite{Do93}.) 
Starting from this remark, it is natural to ask what other Lie brackets are carried by the spaces of differential 1-forms. 
This question was addressed in a differential geometric setting in several recent works, including \cite{DJ21}, 
\cite{Ba21}, \cite{DJ23}, \cite{DJS22}. 
It is the aim of the present paper to address that question in a purely algebraic framework, using the method of pre-Lie-Rinehart algebras introduced in \cite{FMaMu21}.

{The structure of the present paper is as follows. 
In Section~\ref{Sect2} we collect preliminary remarks and we state the main problem towards whose solution we take a first step (Problem~\ref{basic}). 
In Section~\ref{Sect3} we establish some auxiliary facts on covariant differential operators. 
Along the way, we prove the transitivity of the Lie-Rinehart algebra associated to a projective module (Proposition~\ref{transit}), which bears on a problem pointed out in \cite[Rem. 4.19]{BKS22}. 
Section~\ref{Sect4} contains 
our main results, which provide new constructions of cyclic (pre-)Lie-Rinehart algebras.   See Theorems \ref{LSA_prop} and \ref{LSA_prop_new}, as well as their corollaries. 
Finally, in Example~\ref{A_ex} we illustrate the abstract results by some examples related to ordinary differential equations.

\section{The basic problem}
\label{Sect2}

In this preliminary section we fix notation and we state a classification problem on Lie-Rinehart structures with a fixed anchor map (Problem~\ref{basic}). 
The main results of the present paper are related to that problem in the special case of anchor maps whose image is a cyclic module (Definition~\ref{cyc_def}). 
	
	\begin{definition}[{\cite{Ri63}}]
		\normalfont 
		We define the following $R$-modules
		\begin{align*}
			T_R&:=\Der(R)=\{X\colon R\to R\mid X\text{ is a $\KK$-linear derivation}\}, \\
			\Omega^1_R&:=\Hom_R(T_R,R)=\{\alpha\colon T_R\to R\mid\alpha\text{ is $R$-linear}\}, \\
			\Omega^2_R&:=\{\omega\colon T_R\times T_R\to R\mid \omega\text{ is skew-symmetric $R$-bilinear}\},
		\end{align*}
		and the mappings
		$$\de\colon R\to \Omega^1_R,\quad  (\de r)(X):=X(r)\text{ for all }r\in R,\ X\in T_R$$ 
		and 
		$$\de\colon\Omega_R^1\to\Omega_R^2,\quad  (\de\alpha)(X,Y):=X(\alpha(Y))-Y(\alpha(X))-\alpha([X,Y]).$$ 
		Moreover, for every $X\in T_R$ we define 
		\begin{align}
			\label{LD1}
			\LD_X\colon\Omega_R^1\to\Omega_R^1,\quad 
			(\LD_X\alpha)(Y)
			:= & X(\alpha(Y))-\alpha([X,Y]) \\
			\label{LD2}
			=&\de\alpha(X,Y)+Y(\alpha(X))
			.
		\end{align}
	\end{definition}
	
	\begin{remark}
		\label{d_prop}	
		\normalfont
		For all $r,s\in R$ and $X\in T_R$ we have 
		$(\de(rs))(X)=X(rs)=X(r)s+rX(s)=(\de r\cdot s)(X)+(r\cdot\de s)(X)$ hence 
		\begin{equation}
			\label{d_prop_eq1}
			\de(rs)=\de r\cdot s+r\cdot\de s.
		\end{equation}
	\end{remark}
	
	\begin{remark}
		\label{LD_prop}
		\normalfont
		If $r\in R$, $\alpha\in\Omega_R^1$, and $X,Y\in T_R$ then 
		\begin{align*}
			\LD_X(r\cdot\alpha)(Y)
			&=X((r\cdot\alpha)(Y))-r\alpha([X,Y])\\
			&=X(r\alpha(Y))-r\alpha([X,Y]) \\
			&=X(r)\alpha(Y)+rX(\alpha(Y))-r\alpha([X,Y]) \\
			&=X(r)\alpha(Y)+r(\LD_X\alpha)(Y)
		\end{align*}
		hence 
		\begin{equation}
			\label{LD_prop_eq1}
			\LD_X(r\cdot\alpha)=X(r)\cdot \alpha+r\cdot (\LD_X\alpha).
		\end{equation}
		On the other hand, for every $r\in R$ we have 
		$[rX,Y]=r[X,Y]-Y(r)\cdot X$ 
		(cf. Example~\ref{tg})
		hence 
		\begin{align*}
			(\LD_{r\cdot X}\alpha)(Y)
			&=rX(\alpha(Y))-\alpha([rX,Y])\\
			&=rX(\alpha(Y))-\alpha(r[X,Y]-Y(r)\cdot X)\\
			&=rX(\alpha(Y))-r\alpha([X,Y])+Y(r)\alpha(X) \\
			&=r(\LD_X\alpha)(Y)+\alpha(X)(\de r)(Y)
		\end{align*}
		and then 
		\begin{equation}
			\label{LD_prop_eq2}
			\LD_{r\cdot X}\alpha=r\cdot \LD_X\alpha+\alpha(X)\cdot \de r.
		\end{equation}
	\end{remark}


We now recall from \cite[\S 2]{Ri63} the notion of $(\KK,R)$-Lie algebra, also called 
\emph{Lie-Rinehart algebra} cf. \cite{Hue21} and \cite{Hue90}. 
See	 \cite{Ma95} for more detailed historical remarks on this notion.

\begin{definition}
	\label{LR_def}
	\normalfont
	Assume that we have a unital left $R$-module $L$ with its corresponding structural map denoted by 
	\[R\times L\to L,\quad (r,\alpha)\mapsto r\cdot\alpha \]
	and an $R$-linear map called the \emph{anchor}, denoted by 
	\[\rho\colon L\to T_R.\]  
	A \emph{$(\KK,R)$-Lie algebra}  
	structure on $L$ is a structure of Lie algebra $(L,[\cdot,\cdot])$ over $\KK$ satisfying the following conditions: 
	\begin{enumerate}[{\rm(a)}]
		\item\label{LR_def_item1} 
		The anchor $\rho\colon L\to T_R$ is a homomorphism of Lie algebras. 
		\item\label{LR_def_item2} For all $r\in R$ and $\ell_1,\ell_2\in L$ we have 
		\begin{equation}
			\label{LR_def_eq1}
			[\ell_1,r\cdot\ell_2]=r\cdot [\ell_1,\ell_2]+(\rho(\ell_1)r)\cdot\ell_2.
		\end{equation}
	\end{enumerate}
\end{definition}

\begin{example}[Lie algebroids]
	\normalfont
	If $(A,[\cdot,\cdot]_A,a_A)$ is a Lie algebroid over a manifold $M$ 
	and we denote $R:=\Ci(M,\RR)$, $L:=\Gamma(A)$, and $\rho:=a_A$, 
	then $[\cdot,\cdot]_A$ is a $(\KK,R)$-Lie algebra structure on $L$ with the anchor~$\rho$. 
\end{example}

\begin{example}[abstract tangent bundle]
	\normalfont
	\label{tg}
	If we define $L:=T_R$ and $\rho:=\id_{T_R}$ then $L$ is a $(\KK,R)$-Lie algebra with the anchor~$\rho$. 
	
	In fact, for all $r,s\in R$ and $X_1,X_2\in T_R$  we have 
	\begin{align*}
		[X_1,r\cdot X_2](s)
		&=X_1(r X_2(s))-r X_2(X_1(s)) \\
		&=X_1(r)X_2(s)+r X_1(X_2(s))-r X_2(X_1(s)) \\
		&=(r\cdot [X_1,X_2])(s)+(X_1(r)\cdot X_2)(s) 
	\end{align*}
	hence \eqref{LR_def_eq1} is satisfied. 
\end{example}

\begin{problem}
\label{basic}
	\normalfont 
	Let $\KK$ be a field of characteristic zero and $R$ a unital commutative associative $\KK$-algebra, and  
	assume that $L$ is a unital left $R$-module. 
	\begin{enumerate}[{\rm(A)}]
		\item Characterize the  $R$-linear maps 
		$\rho\colon L\to\Der(R)$ for which there exists a 
		$(\KK,R)$-Lie algebra structure on $L$ with its corresponding anchor~$\rho$. 
		\item For any fixed $R$-linear maps 
		$\rho\colon L\to\Der(R)$, determine all the $(\KK,R)$-Lie algebra structures on $L$ whose anchor is~$\rho$. 
	\end{enumerate}
\end{problem}

\subsection*{Cyclic subalgebras of $T_R$}

We will investigate the  $(\KK,R)$-Lie algebras $L$ which are ``as non-transitive as possible''. 
More specifically, since transitivity means surjectivity of the anchor map (Definition~\ref{transit_def}), we will focus on the case when the image of the anchor map $\rho\colon L\to\Der(R)$ is as small as possible. 
In this sense, the simplest situation is when $\rho=0$, and then \eqref{LR_def_eq1} shows that the Lie bracket of $L$ is $R$-bilinear, 
which corresponds to the algebraic version of Lie algebra bundles. 

As a nontrivial example when $\rho\ne 0$ and $\rho(L)$ is still ``very small'', 
we will consider the case when $\rho(L)$ is a cyclic submodule of the $R$-module $T_R$, in the sense of the following definition: 

\begin{definition}
	\label{cyc_def}
	\normalfont 
	A \emph{cyclic submodule} of $T_R$ is any singly generated submodule of the $R$-module $T_R$, 
	that is, a submodule of the form 
	$$R\cdot X:=\{rX\mid r\in R\}$$ 
	for some $X\in T_R$. 
\end{definition}

\begin{lemma}
	\label{cyc}
	Every cyclic submodule of $T_R$ is a subalgebra of the Lie algebra $T_R$. 
\end{lemma}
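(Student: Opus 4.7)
The plan is to take two arbitrary elements of the cyclic submodule $R\cdot X$, say $rX$ and $sX$ with $r,s\in R$, and show directly that their Lie bracket in $T_R$ again lies in $R\cdot X$. The only tool needed is the identity $[Y,r\cdot Z]=r\cdot[Y,Z]+Y(r)\cdot Z$ established in Example~\ref{tg}, together with the antisymmetry of the bracket and the trivial fact that $[X,X]=0$.

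The concrete steps are as follows. First I would use Example~\ref{tg} to compute
\[
[rX,sX]=r\cdot[X,sX]-(sX)(r)\cdot X+(\text{correction from pulling the scalar }r\text{ out on the left}),
\]
which is handled by first rewriting $[rX,sX]=-[sX,rX]$ and applying the identity to the inner bracket. Next I would expand $[X,sX]=s\cdot[X,X]+X(s)\cdot X=X(s)\cdot X$ since $[X,X]=0$. Combining these two expansions, the cross terms organise themselves into the expression
\[
[rX,sX]=\bigl(rX(s)-sX(r)\bigr)\cdot X,
\]
which manifestly belongs to $R\cdot X$.

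Since there is no real obstacle — the computation is a two-line application of the Leibniz-type identity already proved — the only care needed is bookkeeping the signs when moving $r$ from the left argument of the bracket to the right via antisymmetry, so that the derivation identity of Example~\ref{tg} can be applied cleanly on both sides. Once the closed-form expression for $[rX,sX]$ is obtained, the conclusion that $R\cdot X$ is a Lie subalgebra of $T_R$ is immediate.
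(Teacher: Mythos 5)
Your proof is correct and arrives at the same closed form $[rX,sX]=\bigl(rX(s)-sX(r)\bigr)\cdot X$ as the paper, merely deriving it from the Leibniz identity of Example~\ref{tg} plus antisymmetry rather than by direct evaluation on a test element of $R$. The only substantive difference is that the paper proves the more general formula $[rX,sY]=rX(s)Y-sY(r)X+rs[X,Y]$ (its equation~\eqref{cyc_proof_eq1}), which it reuses later (e.g.\ in the remark on $R\cdot\gg$ and in the proof of Theorem~\ref{LSA_prop}), whereas your argument yields only the special case $Y=X$ needed for the lemma itself.
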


\begin{proof}	
	If $X,Y\in T_R=\Der(R)$, then for every $r,s,t\in R$ we have 
	$$rX(sY(t))=rX(s)Y(t)+rs(XY)(t)
	\text{ and }
	sY(rX(t))=sY(r)X(t)+sr(YX)(t).$$ 
	Substracting these equalities from each other and using $rs=sr$, we obtain 
	\begin{equation}
		\label{cyc_proof_eq1}
		[rX,sY]=  rX(s)Y-sY(r)X 
         +rs[X,Y].
	\end{equation}
Then, for $X=Y$, we see that $R\cdot X$ is a subalgebra of the Lie algebra $T_R$.
\end{proof} 

\begin{remark}
	\normalfont 
Equation~\eqref{cyc_proof_eq1} directly implies that if $\gg$ is a subalgebra of the Lie algebra $T_R=\Der(R)$ over $\KK$, the $R$-submodule generated by $\gg$, that is,  $R\cdot\gg:=\spa_\KK\{rX\mid r\in R,X\in\gg\}$, is again a  Lie subalgebra of $T_R$. 
In particular, the $R$-submodule of $T_R$ generated by any finite set of mutually commuting derivations of $R$ is a Lie subalgebra of $T_R$, which is the notion of
 left-symmetric Witt algebra from 
 \cite[Eq. (1.1)]{KCB11}. 
 		See also \cite[Prop. 2.1]{Bu06}.
\end{remark}

\begin{definition}
\normalfont 
In the setting of Definition~\ref{LR_def}, 
if the Lie subalgebra $\rho(L)\subseteq T_R$ is a cyclic submodule of $T_R$, then $L$ is called a \emph{cyclic $(\KK,R)$-Lie algebra} or a \emph{cyclic Lie-Rinehart algebra}.
\end{definition}

\section{Covariant differential operators} 
\label{Sect3}

This section contains some auxiliary facts on covariant differential operators 
that will be needed in Section~\ref{Sect4}. 
Motivated by a problem pointed out in \cite[Rem. 4.19]{BKS22}, 
we also prove that  the Lie-Rinehart algebra associated to a projective module is transitive (Proposition~\ref{transit}). 

\subsection*{On the definition of $\mathcal{CDO}(N)$}

\begin{definition}
	\normalfont
	An \emph{anchored module} is a left $R$-module $N$ endowed with an $R$-linear map $\rho^N\colon N\to T_R$ called \emph{anchor}. 
\end{definition}

\begin{notation}
	\normalfont
	If $N$ is a left $R$-module, then for every $r\in R$ define the multiplication operator $\mu^N_r\colon N\to N$, $\mu^N_r(n):=rn$. 
	We also define the morphism of unital associative $\KK$-algebras 
	$$\mu^N\colon R\to\End_\KK (N),\quad \mu^N(r):=\mu^N_r$$
	and we will regard $\End_\KK(N)$ as an $R$-module via the mapping $\mu^N$, 
	i.e., $r\cdot E:= \mu^N_r E$ for every $r\in R$ and $E\in\End_\KK(N)$. 
\end{notation}

\begin{definition}
	\normalfont
	\label{normalizer}
	Let $N$ be a left $R$-module. 
	We define
	$$\widetilde{\mathcal{CDO}}(N):=\{D\in \End_\KK(N)\mid [D,\mu^N(R)]\subseteq\mu^N(R)\}$$
	that is, $\widetilde{\mathcal{CDO}}(N)$ is the normalizer of $\mu^N(R)$ in the Lie algebra $\End_\KK(N)$. 
	
	We say that the $R$-module $N$ is \emph{nondegenerate} if the mapping $\mu^N$ is injective, 
	that is, for every $r\in R\setminus\{0\}$ there exists $n\in N$ with $rn\ne0$. 
	If this is the case, then for every $D\in \widetilde{\mathcal{CDO}}(N)$ there exists a uniquely determined mapping $X_D\colon R\to R$ satisfying 
	$$(\forall r\in R)\quad [D,\mu^N(r)]=\mu^N(X_D(r)),$$
	that is, $D(rn)-rD(n)=X_D(r)n$ for all $r\in R$ and $n\in N$. 
\end{definition}

\begin{lemma}
	If $N$ is an $R$-module, then the following assertions hold: 
	\begin{enumerate}[{\rm(i)}]
		\item The set $\widetilde{\mathcal{CDO}}(N)$ is both a Lie subalgebra and a sub-$R$-module of $\End_\KK(N)$. 
		\item If the $R$-module $N$ is nondegenerate, then  for every $D\in \widetilde{\mathcal{CDO}}(N)$ we have $X_D\in\Der(R)$ and the mapping 
		$$\rho^{\widetilde{\mathcal{CDO}}(N)}\colon \widetilde{\mathcal{CDO}}(N)\to\Der(R),\quad D\mapsto X_D$$
		is both a Lie algebra morphism and a morphism of $R$-modules. 
	\end{enumerate}
\end{lemma}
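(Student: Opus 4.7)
The plan is to recognize that everything in the lemma follows from two observations: first, that $\widetilde{\mathcal{CDO}}(N)$ is by definition the normalizer of the image $\mu^N(R)$ inside the associative $\KK$-algebra $\End_\KK(N)$; and second, that $\mu^N(R)$ is a \emph{commutative} subalgebra since $R$ is commutative. For (i), the Lie subalgebra property is then just the standard fact that the normalizer of any Lie subalgebra of a Lie algebra is again a Lie subalgebra, which follows from the Jacobi identity. For the sub-$R$-module property, I would pick $D\in\widetilde{\mathcal{CDO}}(N)$ and $r,s\in R$ and compute $[\mu^N_r D,\mu^N_s]$; using $[\mu^N_s,\mu^N_r]=0$ this collapses to $\mu^N_r[D,\mu^N_s]\in\mu^N_r\mu^N(R)\subseteq\mu^N(R)$, proving that $r\cdot D=\mu^N_r D\in\widetilde{\mathcal{CDO}}(N)$.

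For (ii), the idea is to work at the level of the associative algebra $\End_\KK(N)$ and use the fact that $[D,\cdot\,]$ is always a derivation of $\End_\KK(N)$, together with the injectivity of $\mu^N$ (nondegeneracy) to transfer identities back to $R$. The $\KK$-linearity of $X_D$ is immediate from the $\KK$-linearity of $[D,\cdot\,]$ and $\mu^N$. For the Leibniz rule, I would write
\[
\mu^N(X_D(rs))=[D,\mu^N(r)\mu^N(s)]=[D,\mu^N(r)]\mu^N(s)+\mu^N(r)[D,\mu^N(s)]=\mu^N(X_D(r)s+rX_D(s)),
\]
and then invoke injectivity of $\mu^N$. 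The Lie morphism property follows the same pattern using the Jacobi identity: $[[D_1,D_2],\mu^N(r)]=[D_1,[D_2,\mu^N(r)]]-[D_2,[D_1,\mu^N(r)]]$ rewrites to $\mu^N([X_{D_1},X_{D_2}](r))$. For the $R$-module morphism property, the commutativity $[\mu^N_r,\mu^N_s]=0$ again reduces $[\mu^N_r D,\mu^N_s]$ to $\mu^N(rX_D(s))$, giving $X_{r\cdot D}=r\cdot X_D$.

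I do not expect any genuine obstacle: the lemma is essentially a repackaging exercise, and the only conceptual input beyond commutator manipulations is the use of nondegeneracy to pass uniquely from operator identities in $\mu^N(R)$ back to identities in $R$. The cleanest writeup is to prove everything uniformly inside $\End_\KK(N)$, exploiting that $[D,\cdot\,]$ is a derivation of the associative product, rather than expanding $D(rn)$ pointwise in~$N$.
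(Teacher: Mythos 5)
Your proof is correct and complete; all the commutator manipulations check out, and the use of nondegeneracy (injectivity of $\mu^N$) to transfer operator identities in $\mu^N(R)$ back to identities in $R$ is exactly the right mechanism. The paper itself offers no argument here --- it simply declares the assertions straightforward and cites Kosmann's treatment of derivative endomorphisms --- so your writeup supplies precisely the standard details the authors had in mind, organized cleanly around the fact that $[D,\cdot\,]$ is a derivation of the associative product on $\End_\KK(N)$ and that $\mu^N(R)$ is commutative.
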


\begin{proof}
	All the assertions are straightforward and well known. 
	See for instance \cite[\S 1.2]{Ko76}, where the elements of $\widetilde{\mathcal{CDO}}(N)$ are called derivative endomorphisms of~$N$.
\end{proof}

The case of general (not necessarily nondegenerate) $R$-modules is handled in the following way, cf. \cite[(2.11)]{Hue90}, \cite[\S 2.2]{CLP04}, and \cite[Sect. 2, Ex. 1(vi)]{C11}: 

\begin{definition}
	\normalfont 
	\label{CDO_def}
	For every $R$-module $N$ we define 
	$$\mathcal{CDO}(N):=\{(D,X)\in\End_\KK(N)\times\Der(R)\mid (\forall r\in R)\ 
	[D,\mu^N(r)]=\mu^N(X(r))\}$$
	and 
	$$\rho^{\mathcal{CDO}(N)}\colon \mathcal{CDO}(N)\to\Der(R),\quad 
	\rho^{\mathcal{CDO}(N)}(D,X):=X.$$
	Then $\mathcal{CDO}(N)$ is both a Lie subalgebra and a sub-$R$-module of $\End_\KK(N)\times\Der(R)$.
\end{definition}

\begin{remark}
	\normalfont 
	\label{CDO_rem}
	For an arbitrary $R$-module $N$, the set $\mathcal{CDO}(N)$ is both a Lie subalgebra and a sub-$R$-module of $\End_\KK(N)\times\Der(R)$, the mapping $\rho^{\mathcal{CDO}(N)}\colon \mathcal{CDO}(N)\to\Der(R)$ is both a Lie algebra morphism and a morphism of $R$-modules,  and moreover $\mathcal{CDO}(N)$ is a $(\KK,R)$-Lie algebra with its anchor $\rho^{\mathcal{CDO}(N)}$. 
	
	If moreover the $R$-module $N$ is nondegenerate, then the mapping 
	$$\widetilde{\mathcal{CDO}}(N)\to\mathcal{CDO}(N),\quad D\mapsto (D,X_D)$$ 
	is an isomorphism of $(\KK,R)$-Lie algebras with its inverse 
	$$\mathcal{CDO}(N)\to\widetilde{\mathcal{CDO}}(N),\quad(D,X)\mapsto D.$$ 
\end{remark}

\begin{definition}
	\label{conn_def}
	\normalfont 
	If $L$ and $M$ are anchored modules, then an \emph{$M$-connection} in $L$ 
	is an $R$-linear mapping $\nabla\colon M\to L$ satisfying $\rho^L\circ\nabla=\rho^M$. 
	
	In the special case when $L=\mathcal{CDO}(N)$ for some $R$-module $N$, 
	we write 
	$$\nabla(m)=(\nabla_m,\rho^M(m))$$ 
	where $\nabla_m\in\widetilde{\mathcal{CDO}}(N)$. 
\end{definition}

\subsection*{On the existence of connections}

\begin{definition}
	\label{transit_def}
	\normalfont 
	A $(\KK,R)$-Lie algebra $L$ is called \emph{transitive} if its anchor $\rho^L\colon L\to T_R$ is surjective. 
\end{definition}

We recall that the Lie algebroid of any vector bundle is transitive. 
(See for instance \cite[Ch. I, \S 1.3]{Ba21}.)
The transitivity problem for the $(\KK,R)$-Lie algebra of an $R$-module is pointed out in \cite[Rem. 4.19]{BKS22}. 
In the following proposition we show that the transitivity problem has an affirmative answer in the geometrically relevant case of projective modules. 

\begin{proposition}
	\label{transit}
	If $N$ is a projective left $R$-module, then its corresponding $(\KK,R)$-Lie algebra $\mathcal{CDO}(N)$ is transitive. 
\end{proposition}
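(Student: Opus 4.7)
The plan is to show surjectivity of $\rho^{\mathcal{CDO}(N)}\colon \mathcal{CDO}(N)\to\Der(R)$ by constructing, for each $X\in\Der(R)$, an endomorphism $D_X\in\End_\KK(N)$ such that $(D_X,X)\in\mathcal{CDO}(N)$. Unwinding Definition~\ref{CDO_def}, this amounts to producing a $\KK$-linear $D_X\colon N\to N$ with the Leibniz-type identity $D_X(rn)=rD_X(n)+X(r)n$ for all $r\in R$ and $n\in N$. In other words, I want to build a $\KK$-linear ``covariant derivative along $X$'' on $N$, which is a standard algebraic analogue of a connection.

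First I would dispose of the free case. If $F$ is a free $R$-module with basis $\{e_i\}_{i\in I}$, then for any $X\in\Der(R)$ the formula
$$D_X^F\Bigl(\sum_i r_i e_i\Bigr):=\sum_i X(r_i)\, e_i$$
(finite sums) defines a $\KK$-linear endomorphism of $F$, and a direct verification using $X\in\Der(R)$ shows that $(D_X^F,X)\in\mathcal{CDO}(F)$. Then I would exploit projectivity: since $N$ is projective, there exist a free $R$-module $F$ together with $R$-linear maps $\iota\colon N\to F$ and $\pi\colon F\to N$ such that $\pi\circ\iota=\id_N$. Setting
$$D_X:=\pi\circ D_X^F\circ\iota\colon N\to N$$
gives a $\KK$-linear endomorphism, and the Leibniz identity for $D_X^F$ together with the $R$-linearity of $\iota$ and $\pi$ yields
$$D_X(rn)=\pi\bigl(D_X^F(r\cdot \iota(n))\bigr)=\pi\bigl(r\cdot D_X^F(\iota(n))+X(r)\cdot \iota(n)\bigr)=rD_X(n)+X(r)n,$$
so $(D_X,X)\in\mathcal{CDO}(N)$ and $\rho^{\mathcal{CDO}(N)}(D_X,X)=X$.

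The main obstacle is essentially conceptual rather than technical: one needs to recognize that the Leibniz rule, although not an $R$-linear but a first-order condition, is preserved under conjugation by a pair of \emph{$R$-linear} maps whose composition is the identity. Projectivity of $N$ is precisely what supplies such an $R$-linear retraction of a free module onto $N$, and without $R$-linearity of $\iota$ and $\pi$ the scalar $r$ could not be pulled across them in the computation above. After this observation the construction is routine and the transitivity follows at once.
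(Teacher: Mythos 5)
Your proof is correct and follows essentially the same route as the paper: both arguments equip a free module $F$ with the obvious flat connection $\sum_i r_i e_i\mapsto\sum_i X(r_i)e_i$ and then transfer it to $N$ via the projective splitting (the paper phrases this with an idempotent $P\colon F\to F$ having $P(F)=N$, which is just your pair $\iota,\pi$ packaged as $P=\iota\circ\pi$). Your closing remark correctly isolates the key point, namely that the first-order Leibniz condition survives conjugation by the $R$-linear maps $\iota$ and $\pi$.
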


\begin{proof}
	We must prove that for every $X\in T_R$ there exists 
$D\in\widetilde{\mathcal{CDO}}(N)$ with $(D,X)\in\mathcal{CDO}(N)$.
	
	Since $N$ is a projective $R$-module, there exist a free left $R$-module $F$ and an $R$-linear map $P\colon F\to F$ satisfying $P\circ P=P$, whose range $P(F)$ is isomorphic to $N$ as $R$-modules. 
	Without loss of generality we may assume $P(F)=N$. 
	
	Let $(e_i)_{i\in I}$ be a basis of the free $R$-module $F$. 
	Then there exists a unique family $(e_i^*)_{i\in I}$ in $\Hom_R(F,R)$ with the property that, for every $f\in F$, the set $\{i\in I\mid e_i^*(f)\ne0\}$ is finite and 
	$f=\sum\limits_{i\in I}e_i^*(f)e_i$.  
	We then define 
	$$\widetilde{D}\colon F\to F,\quad \widetilde{D}(f):=\sum_{i\in I}X(e_i^*(f))\cdot P(e_i).$$
	For every $f\in F$ and $r\in R$ we have 
	\begin{align*}
		\widetilde{D}(rf)
		&=\sum_{i\in I}X(e_i^*(rf))\cdot P(e_i) \\
		&=\sum_{i\in I}X(re_i^*(f))\cdot P(e_i) \\
		&=\sum_{i\in I}\bigl(rX(e_i^*(f))+X(r)e_i^*(f)\bigr) P(e_i) \\
		&=r\widetilde{D}(f)+X(r)P(f).
	\end{align*}
	On the other hand, since $P\circ P=P$ and $P(F)=N$, we have $P(f)=f$ for every $f\in N$, 
	hence the above equality implies 
	$$(\forall f\in N)(\forall r\in R)\quad \widetilde{D}(rf)=r\widetilde{D}(f)+X(r)f.$$
	We now note that, since $P(F)=N$, the definition of $\widetilde{D}$ implies $\widetilde{D}(F)\subseteq N$. 
	Consequently, if we define $D:=\widetilde{D}\vert_N$, then we have $D\colon N\to N$ 
	and moreover 
	$(D,X)\in\mathcal{CDO}(N)$,  
	which completes the proof. 
\end{proof}

We now draw the following corollary on the existence of connections can be established in the present abstract framework, in which the partitions of unity are not available as in the classical setting. 
This result was first obtained in \cite[Prop. 1.1]{Po06}.
(See also \cite[(2.14)]{Hue90}.)

\begin{corollary}
	Let $L$ be an anchored projective $R$-module. 
	For every projective left $R$-module $N$  
	there exists a connection $\nabla\colon L\to\mathcal{CDO}(N)$. 
\end{corollary}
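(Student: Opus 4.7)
The plan is to deduce the corollary directly from Proposition~\ref{transit} together with the standard lifting property of projective modules. By definition of a connection (Definition~\ref{conn_def}), I need to produce an $R$-linear map $\nabla\colon L\to\mathcal{CDO}(N)$ such that $\rho^{\mathcal{CDO}(N)}\circ\nabla=\rho^L$.

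The key observation is that Proposition~\ref{transit} exhibits the anchor
$$\rho^{\mathcal{CDO}(N)}\colon\mathcal{CDO}(N)\to T_R$$
as a surjective $R$-linear morphism (its $R$-linearity is recorded in Remark~\ref{CDO_rem}, and surjectivity follows from the transitivity of $\mathcal{CDO}(N)$, since $N$ is projective). On the other hand, $\rho^L\colon L\to T_R$ is an $R$-linear map into the target of this surjection. Therefore I am precisely in the situation of the lifting property that characterizes projective modules.

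The remaining step is then to invoke projectivity of $L$: since $\rho^{\mathcal{CDO}(N)}$ is a surjective morphism of $R$-modules and $L$ is projective, there exists an $R$-linear map $\nabla\colon L\to\mathcal{CDO}(N)$ with $\rho^{\mathcal{CDO}(N)}\circ\nabla=\rho^L$. This $\nabla$ is by construction a connection, which concludes the argument.

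There is essentially no obstacle; the point of the corollary is conceptual rather than technical, namely to observe that once transitivity of $\mathcal{CDO}(N)$ has been established in the abstract algebraic setting (Proposition~\ref{transit}), the existence of connections reduces to an entirely formal application of projectivity of $L$, bypassing the partitions of unity that are usually employed in the differential geometric proof.
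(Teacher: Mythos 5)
Your argument is correct and is essentially identical to the paper's own proof: both deduce surjectivity of $\rho^{\mathcal{CDO}(N)}$ from Proposition~\ref{transit} (using projectivity of $N$) and then lift $\rho^L$ through this surjection via the projectivity of $L$. Nothing further is needed.
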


\begin{proof}
	Since the $R$-module $N$ is projective, we know from Proposition~\ref{transit} that the anchor mapping $\rho^{\mathcal{CDO}(N)}\colon \mathcal{CDO}(N)\to T_R$ is surjective. 
	Since $L$ is a projective $R$-module, it then follows that there exists an $R$-linear mapping $\nabla\colon L\to\mathcal{CDO}(N)$ satisfying  $\rho^{\mathcal{CDO}(N)}\circ\nabla=\rho^L$. 
\end{proof}

\section{Duality pairings and pre-Lie-Rinehart algebra structures}
\label{Sect4}

This section contains our main results, which provide new examples of cyclic Lie-Rinehart algebras. 
See Theorems \ref{LSA_prop} and \ref{LSA_prop_new} as well as their corollaries.

\subsection*{Pre-Lie-Rinehart algebras}

We recall the notion of pre-Lie-Rinehart algebra in the sense of \cite[Def. 2.9]{FMaMu21}; 
see also \cite[Def. 2.8]{ChLL22}. 
In differential geometry, this notion corresponds to the left-symmetric algebroids studied in \cite{LShBCh16}.

\begin{definition}
	\label{preLR_def}
	\normalfont
	Assume that we have a unital left $R$-module $L$ which is an anchored module with its corresponding anchor $\rho^L\colon L\to T_R$. 
	A 
	\emph{pre-Lie-Rinehart algebra} structure on $L$ is an $R$-linear map
	(i.e., an $L$-connection) 
	$$\nabla\colon L\to\mathcal{CDO}(L), \quad \ell\mapsto \nabla(\ell)=(\nabla_\ell,\rho^L(\ell)),$$ 
	where $\nabla_\ell\in\widetilde{\mathcal{CDO}}(L)$ for every $\ell\in L$, 
	 satisfying the following conditions, 
	 in which we define 
	 \begin{equation}
	 	\label{preLR_def_eq1} 
	 (\forall \ell_1,\ell_2\in L)\quad \ell_1\cdot\ell_2:=\nabla_{\ell_1}\ell_2 
	 \text{ and }[\ell_1,\ell_2]:= \ell_1\cdot\ell_2- \ell_2\cdot\ell_1.
	 \end{equation}
	\begin{enumerate}[{\rm(a)}]
		\item\label{preLR_def_item1} 
		The associator mapping 
		\begin{equation}
			\label{preLR_def_item1_eq1} 
		(\cdot,\cdot,\cdot)\colon L\times L\times L\to L, \quad 
			(\ell_1,\ell_2,\ell_3)
		:=
		\ell_1\cdot(\ell_2\cdot \ell_3)-(\ell_1\cdot \ell_2)\cdot \ell_3
		\end{equation}
	is \emph{left symmetric}, i.e., it satisfies $(\ell_1,\ell_2,\ell_3)=(\ell_2,\ell_1,\ell_3)$ for all $\ell_1,\ell_2,\ell_3\in L$. 
	\item\label{preLR_def_item2} We have 
	\begin{equation}
		\label{preLR_def_item2_eq1} 
		(\forall \ell_1,\ell_2\in L)\quad 
		\rho^L([\ell_1,\ell_2])
		=[\rho^L(\ell_1),\rho^L(\ell_2)]\in \Der(R)=T_R. 
	\end{equation}
	\end{enumerate}
\end{definition}

\begin{remark}
\label{preLR_rem}	
\normalfont
Here are two simple remarks on Definition~\ref{preLR_def}: 
\begin{enumerate}[{\rm(i)}]
	\item\label{preLR_rem_item1} 
The bracket $[\cdot,\cdot]$ defined in equation~\eqref{preLR_def_eq1}  
	turns the anchored module $L$ into a Lie-Rinehart algebra. 	
	\\
	In fact, it suffices to check that  bracket $[\cdot,\cdot]$ satisfies the Jacobi identity. 
	To this end, one needs the remark at the bottom of \cite[p. 324]{Bu06}, 
	which gives the following relation between the Jacobiator and the associator: 
	\begin{align*}
		[\ell_1,[\ell_2,\ell_3]]
		& +[\ell_2,[\ell_3,\ell_1]]+[\ell_3,[\ell_1,\ell_2]]\\
		=
		& (\ell_1,\ell_2,\ell_3)+(\ell_2,\ell_3,\ell_1)+(\ell_3,\ell_1,\ell_2) \\
		&-(\ell_2,\ell_1,\ell_3)-(\ell_3,\ell_2,\ell_1)-(\ell_1,\ell_3,\ell_2). 
	\end{align*}
Thus, the bracket  $[\cdot,\cdot]$ satisfies the Jacobi identity 
	if the associator 
	$(\ell_1,\ell_2,\ell_3)$ 
	is symmetric in its first two variables. 
	\item\label{preLR_rem_item2} 
	It is easily seen that the left-symmetry property of the associator mapping is equivalent to 
	$$ 	(\forall\ell_1,\ell_2\in L)\quad	\nabla_{\ell_1\cdot\ell_2- \ell_2\cdot\ell_1}
	=\nabla_{\ell_1}\nabla_{\ell_2}-\nabla_{\ell_2}\nabla_{\ell_1}\in\End_\KK(L),$$
	Hence the conditions \eqref{preLR_def_item1}--\eqref{preLR_def_item2} in Definition~\ref{preLR_def} equation~\eqref{preLR_def_item2_eq1}  are equivalent to 
	\begin{equation}
		\label{preLR_rem_item2_eq1} 
	(\forall\ell_1,\ell_2\in L)\quad \nabla([\ell_1,\ell_2])=[\nabla(\ell_1),\nabla(\ell_2)].
	\end{equation}
In the right-hand side of equation~\eqref{preLR_rem_item2_eq1} we use the Lie bracket of the Lie-Rinehart algebra $\mathcal{CDO}(L)$, 	cf. Definition~\ref{CDO_def} and Remark~\ref{CDO_rem}. 
In the terminology of \cite[\S 2.1]{FMaMu21}, the equation~\eqref{preLR_rem_item2_eq1} means that the connection~$\nabla$ is flat. 
\end{enumerate}
\end{remark}

\subsection*{Duality pairings}

\begin{definition}
	\label{duality_def}
	\normalfont 
	A \emph{duality pairing} of two $R$-modules $L$ and $N$ is an $R$-bilinear mapping 
	$\langle\cdot,\cdot\rangle\colon L\times N\to R$ 
	satisfying the following nondegeneracy conditions for any $\ell\in L$ and $n\in N$: 
	\begin{equation}
		\label{duality_def_l}
		\ell =0\iff\langle \ell ,N\rangle=\{0\}
	\end{equation}
	and 
	\begin{equation}
		\label{duality_def_r}
		n=0\iff\langle L,n\rangle=\{0\}.
	\end{equation}
	If this is the case, and $E\in\End_\KK(L)$ and $D\in\End_\KK(N)$, 
	then we say that \emph{the endomorphisms $D$ and $E$ are dual to each other} if 
	$$(\forall \ell\in L)(\forall n\in N)\quad \langle E(\ell),n\rangle=\langle \ell,D(n)\rangle.$$
\end{definition}

\begin{remark}
	\label{nondegs}
	\normalfont
	If $\langle\cdot,\cdot\rangle\colon L\times N\to R$  is a duality pairing, 
	then the $R$-module $L$ is nondegenerate if and only if $N$ is nondegenerate. 
	(See Definition~\ref{normalizer}.)
	
	Indeed, let us assume that $L$ is nondegenerate and $r_0\in R$ satisfies $r_0n=0$ for every $n\in N$. 
	Then for every $\ell\in L$ and $n\in N$ we have $\langle r_0 \ell,n\rangle=r_0\langle \ell,n\rangle=\langle \ell,r_0n\rangle=0$. 
	Hence, by \eqref{duality_def_l}, 
	we obtain $r_0\ell=0$ for every $\ell\in L$ and then, since the $R$-module $L$ is non-degenerate, we obtain $r_0=0$. 
	This shows that the $R$-module $N$ is nondegenerate, too. 
\end{remark}

\begin{example}
	\normalfont 
	If  $\langle\cdot,\cdot\rangle\colon L\times N\to R$  is a duality pairing of  two $R$-modules $L$ and $N$, then for every $r\in R$ the multiplication-by-$r$ operators $\mu^L_r \colon L\to L$ and $\mu^N_r \colon N\to N$ are dual to each others 
	since we have 
	$\langle r\ell,n\rangle=r\langle \ell,n\rangle=\langle \ell,rn\rangle$ for all $\ell\in L$ and $n\in N$. 
\end{example}

\begin{example}
	\normalfont 
	If $N$ is an $R$-module and we define $L:=\Hom_R(N,R)$, then the evaluation mapping 
	$$\langle\cdot,\cdot\rangle\colon L\times N\to R,\quad 
	\langle\alpha,n\rangle:=\alpha(n)$$
	is a duality pairing. 
\end{example}

\begin{lemma}
	\label{LSA_lemma}
	Assume that $\langle\cdot,\cdot\rangle\colon L\times N\to R$  is a duality pairing of  two $R$-modules $L$ and $N$, and moreover we have two endomorphisms $E\in\End_\KK(L)$ and $D\in\End_\KK(N)$ 
	satisfying
	$$(\forall \ell\in L)(\forall n\in N)\quad 
	T(\langle \ell,n\rangle)=\langle E(\ell),n\rangle+\langle \ell,D(n)\rangle
	$$
	for some mapping $T\colon R\to R$. 
	
	Then for every $X\in T_R$ we have 
	$$(E,X)\in\mathcal{CDO}(L)\iff (D,X)\in\mathcal{CDO}(N).$$
\end{lemma}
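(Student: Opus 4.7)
The plan is to exploit the symmetry of the hypothesis: the displayed identity $T(\langle \ell,n\rangle)=\langle E(\ell),n\rangle+\langle \ell,D(n)\rangle$ is invariant under swapping the roles of $(L,E)$ and $(N,D)$ (and replacing the pairing by its transpose), so it is enough to prove one implication, say $(E,X)\in\mathcal{CDO}(L)\Rightarrow(D,X)\in\mathcal{CDO}(N)$.

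Assume $(E,X)\in\mathcal{CDO}(L)$, so that $E(r\ell)=rE(\ell)+X(r)\ell$ for all $r\in R,\ell\in L$. The first step is to extract information about the auxiliary map $T$. Computing $T(\langle r\ell,n\rangle)=T(r\langle \ell,n\rangle)$ via the defining identity and using $R$-bilinearity of the pairing and the $\mathcal{CDO}$-relation for $E$ yields
\begin{equation*}
T(r\langle \ell,n\rangle)=\langle E(r\ell),n\rangle+\langle r\ell,D(n)\rangle
=rT(\langle \ell,n\rangle)+X(r)\langle \ell,n\rangle.
\end{equation*}
In other words, although $T$ need not be a derivation on all of $R$, it behaves as one on the submodule of ``pairing values''.

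The second step is to test $(D,X)\in\mathcal{CDO}(N)$ against the pairing. For arbitrary $\ell\in L,n\in N,r\in R$, apply the defining identity twice to rewrite $\langle \ell,D(rn)\rangle$ and $r\langle \ell,D(n)\rangle$ in terms of $T$ and $E$; the $E$-contributions cancel by $R$-linearity of the pairing, and what remains is $T(r\langle \ell,n\rangle)-rT(\langle \ell,n\rangle)$. By the previous step this equals $X(r)\langle \ell,n\rangle=\langle \ell,X(r)n\rangle$. Consequently
\begin{equation*}
\langle \ell,D(rn)-rD(n)-X(r)n\rangle=0\quad\text{for all }\ell\in L.
\end{equation*}
Invoking the right nondegeneracy condition \eqref{duality_def_r} gives $D(rn)-rD(n)=X(r)n$, which is precisely $(D,X)\in\mathcal{CDO}(N)$.

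There is no real obstacle; the only subtlety is to notice that $T$ itself is not assumed to lie in $\Der(R)$, so one must not appeal to a Leibniz identity for $T$ in general. Instead, the derivation-like property of $T$ is \emph{derived} on pairing values from the $\mathcal{CDO}$-assumption on $E$, and this is precisely what is needed for the nondegeneracy argument to conclude. The reverse implication follows by the symmetric argument applied to the transposed pairing $\langle\cdot,\cdot\rangle^{\mathrm{op}}\colon N\times L\to R$, using condition \eqref{duality_def_l} in place of \eqref{duality_def_r}.
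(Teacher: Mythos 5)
Your proof is correct and follows essentially the same route as the paper's: reduce by the symmetry of the hypothesis to the implication $(E,X)\in\mathcal{CDO}(L)\Rightarrow(D,X)\in\mathcal{CDO}(N)$, combine the defining identity for $T$ with the relation $E(r\ell)=rE(\ell)+X(r)\ell$ and the $R$-bilinearity of the pairing, and conclude via the nondegeneracy condition \eqref{duality_def_r}. Your intermediate observation that $T$ acts as a derivation on pairing values is merely a repackaging of the same one-line computation the paper carries out directly.
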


\begin{proof}
	It suffices to prove the implication``$\Rightarrow$''.
	The hypothesis $(E,X)\in\mathcal{CDO}(L)$ implies 
	\begin{equation*}
		(\forall \ell\in L)	\quad E(r\ell)=rE(\ell)+X(r)\ell.
	\end{equation*}
	We then obtain for every $n\in N$ 
	\begin{align*}
		\langle \ell,D(rn)\rangle
		& =-\langle E(\ell),rn\rangle +T(\langle \ell,rn\rangle)\\
		& =-\langle rE(\ell),n\rangle +T(\langle r\ell,n\rangle)\\
		&=-\langle E(r\ell)-X(r)\ell,n\rangle+T(\langle r\ell,n\rangle) \\
		&=\langle r\ell,D(n)\rangle+X(r)\langle \ell,n\rangle\\
		&=\langle \ell,rD(n)+X(r)n\rangle.
	\end{align*}
	Now, using \eqref{duality_def_r}, we obtain 
	\begin{equation*}
		(\forall n\in N)\quad D(rn)=rD(n)+X(r)n
	\end{equation*} 
	hence $(D,X)\in\mathcal{CDO}(N)$.
\end{proof}

\begin{theorem}
	\label{LSA_prop}
	Assume that we have the following data: 
	\begin{itemize}
		\item $\langle\cdot,\cdot\rangle\colon L\times N\to R$  is a duality pairing of  two $R$-modules $L$ and $N$;
		\item $(D_0,X_0)\in \mathcal{CDO}(N)$, $y_0\in N$, and $E_0\in\End_\KK(L)$
		satisfying the conditions that 
		the mapping 
		\begin{equation}
			\label{LSA_prop_eq1}
			L\times L\to R,\quad (\ell_1,\ell_2)\mapsto \langle \ell_1,y_0\rangle 
			\langle \ell_2,D_0(y_0)\rangle
		\end{equation}
	is symmetric  
	and 
		\begin{equation}
			\label{LSA_prop_eq2}
			(\forall \ell\in L)(\forall n\in N)\quad 
			X_0(\langle \ell,n\rangle)
			=\langle E_0(\ell),n\rangle+\langle \ell,
			D_0(n)\rangle.
		\end{equation}
	\end{itemize}
	If we define the anchor 
	\begin{equation}
		\label{LSA_prop_eq3}
		\rho^L\colon L\to T_R,\quad \rho^L(\ell):=\langle \ell,y_0\rangle 
		X_0
	\end{equation}
	then the mapping 
	$$\nabla\colon L\to \mathcal{CDO}(L),\quad \nabla(\ell):=(\langle \ell,y_0\rangle E_0,\rho^L(\ell))
	\in\widetilde{\mathcal{CDO}}(L)\times T_R
	$$
	is a well-defined $L$-connection on the anchored module $L$ 
	and the mapping 
	\begin{equation}
	\label{multiplication-1}
    L\times L\to L,\quad (\ell_1,\ell_2)\mapsto \ell_1\cdot \ell_2:=\nabla_{\ell_1}\ell_2=\langle \ell_1,y_0\rangle E_0(\ell_2)
    \end{equation}
	defines the structure of a pre-Lie-Rinehart algebra on $L$. 
\end{theorem}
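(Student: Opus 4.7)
\medskip
\noindent\textbf{Proof plan.}
The plan is to verify in sequence the three points packaged in the statement: that $\nabla(\ell)$ actually lies in $\mathcal{CDO}(L)$ for every $\ell\in L$, that the resulting map $\nabla$ is an $L$-connection on the anchored module $(L,\rho^L)$, and that the associated multiplication~\eqref{multiplication-1} satisfies the pre-Lie-Rinehart axioms of Definition~\ref{preLR_def}.

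First I would feed the hypothesis $(D_0,X_0)\in\mathcal{CDO}(N)$ together with the compatibility~\eqref{LSA_prop_eq2} into Lemma~\ref{LSA_lemma}, taking $T=X_0$, $E=E_0$, $D=D_0$. The lemma yields $(E_0,X_0)\in\mathcal{CDO}(L)$. A direct check from the defining identity $(D,X)\in\mathcal{CDO}(\cdot)$ shows that if $(E,X)$ belongs to $\mathcal{CDO}(L)$ then so does $(rE,rX)$ for every $r\in R$; applying this with $r:=\langle\ell,y_0\rangle$ gives $\nabla(\ell)=(\langle\ell,y_0\rangle E_0,\langle\ell,y_0\rangle X_0)\in\mathcal{CDO}(L)$. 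The $R$-linearity of $\nabla$ is then immediate from the $R$-linearity of $\ell\mapsto\langle\ell,y_0\rangle$, and $\rho^{\mathcal{CDO}(L)}\circ\nabla=\rho^L$ holds by construction, so $\nabla$ is an $L$-connection.

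Next I would unpack the associator defined in \eqref{preLR_def_item1_eq1} for the product~\eqref{multiplication-1}. Using $(E_0,X_0)\in\mathcal{CDO}(L)$, the Leibniz-type identity
\[
E_0(\langle\ell_2,y_0\rangle E_0(\ell_3))=\langle\ell_2,y_0\rangle E_0^2(\ell_3)+X_0(\langle\ell_2,y_0\rangle)E_0(\ell_3)
\]
together with \eqref{LSA_prop_eq2} applied to $n=y_0$ (which rewrites $X_0(\langle\ell_2,y_0\rangle)$ as $\langle E_0(\ell_2),y_0\rangle+\langle\ell_2,D_0(y_0)\rangle$) leads after cancellation to
\[
(\ell_1,\ell_2,\ell_3)=\langle\ell_1,y_0\rangle\langle\ell_2,y_0\rangle E_0^2(\ell_3)+\langle\ell_1,y_0\rangle\langle\ell_2,D_0(y_0)\rangle E_0(\ell_3).
\]
Swapping $\ell_1$ and $\ell_2$, the first summand is unchanged by commutativity of $R$, while the equality of the second summand with its image under the swap is exactly the symmetry assumption on the bilinear form~\eqref{LSA_prop_eq1}. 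This proves axiom~\eqref{preLR_def_item1} of Definition~\ref{preLR_def}.

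Finally, to obtain axiom~\eqref{preLR_def_item2}, I would compute
\[
\rho^L([\ell_1,\ell_2])=\bigl(\langle\ell_1,y_0\rangle\langle E_0(\ell_2),y_0\rangle-\langle\ell_2,y_0\rangle\langle E_0(\ell_1),y_0\rangle\bigr)X_0
\]
and use \eqref{LSA_prop_eq2} with $n=y_0$ to replace $\langle E_0(\ell_i),y_0\rangle$ by $X_0(\langle\ell_i,y_0\rangle)-\langle\ell_i,D_0(y_0)\rangle$. The symmetry of~\eqref{LSA_prop_eq1} forces the two $\langle\cdot,D_0(y_0)\rangle$ terms to cancel, reducing the coefficient of $X_0$ to $\langle\ell_1,y_0\rangle X_0(\langle\ell_2,y_0\rangle)-\langle\ell_2,y_0\rangle X_0(\langle\ell_1,y_0\rangle)$. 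On the other hand, specialising formula~\eqref{cyc_proof_eq1} to $X=Y=X_0$ with $r=\langle\ell_1,y_0\rangle$, $s=\langle\ell_2,y_0\rangle$ yields exactly the same expression for $[\rho^L(\ell_1),\rho^L(\ell_2)]$. I expect the main bookkeeping challenge to be precisely this last step: the cancellation of the $D_0(y_0)$ contributions is the place where the symmetry hypothesis on~\eqref{LSA_prop_eq1} plays its decisive role, and one must resist the temptation to invoke flatness (Remark~\ref{preLR_rem}\eqref{preLR_rem_item2}) before the axioms themselves have been established.
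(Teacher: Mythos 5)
Your proposal is correct and follows essentially the same route as the paper's proof: Lemma~\ref{LSA_lemma} gives $(E_0,X_0)\in\mathcal{CDO}(L)$, the sub-$R$-module property of $\mathcal{CDO}(L)$ gives well-definedness of $\nabla$, the associator reduces to $\langle\ell_1,y_0\rangle\langle\ell_2,y_0\rangle E_0^2(\ell_3)+\langle\ell_1,y_0\rangle\langle\ell_2,D_0(y_0)\rangle E_0(\ell_3)$ whose symmetry is exactly hypothesis~\eqref{LSA_prop_eq1}, and the anchor compatibility follows from the same cancellation of the $D_0(y_0)$ terms via~\eqref{cyc_proof_eq1}. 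No gaps.
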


\begin{proof}	
	Since 
		$(D_0,X_0)\in \mathcal{CDO}(N)$, 
	it follows by Lemma~\ref{LSA_lemma} that 
	$E_0\in\widetilde{\mathcal{CDO}}(L)$ 
	and 
	\begin{equation}
		\label{LSA_prop_proof_eq1}
		(\forall \ell\in L)(\forall r\in R)\quad E_0(r\ell)=rE_0(\ell)+
		X_0(r)\ell.
	\end{equation}
	It also follows by \eqref{LSA_prop_eq3} that the mapping $\nabla$ is 	$R$-linear. 
	Moreover, by \eqref{LSA_prop_eq2}, we have 
	$\nabla(\ell)\in\mathcal{CDO}(L)$ for every $\ell\in L$. 
	Thus, since $\rho^{\mathcal{CDO}(L)}\circ\nabla=\rho^L$, we see that 
	$\nabla$ is a well-defined $L$-connection on the anchored module $L$. 
	
	For later use, we now note that the symmetry hypothesis on \eqref{LSA_prop_eq1} is equivalent to 
	\begin{equation}
		\label{LSA_prop_proof_eq2}
		(\forall \ell_1,\ell_2\in L) \quad 
		\langle \ell_1,y_0\rangle \langle \ell_2,
		D_0(y_0)\rangle
		=\langle \ell_2,y_0\rangle \langle \ell_1, 
		D_0(y_0)\rangle. 
	\end{equation}
	For any $\ell_1,\ell_2,\ell_3\in L$ we have 
	\begin{align*}
		(\ell_1,\ell_2,\ell_3)
		:=
		&\ell_1\cdot(\ell_2\cdot \ell_3)-(\ell_1\cdot \ell_2)\cdot \ell_3 \\
		=
		&\langle \ell_1,y_0\rangle E_0(\ell_2\cdot \ell_3)-\langle \ell_1\cdot \ell_2,y_0\rangle E_0(\ell_3)\\
		=
		&\langle \ell_1,y_0\rangle E_0( \langle \ell_2,y_0\rangle E_0(\ell_3)) \\
		&-\langle \ell_1,y_0\rangle\langle E_0(\ell_2),y_0\rangle E_0(\ell_3) \\
		=&
		\langle \ell_1,y_0\rangle  \langle \ell_2,y_0\rangle E_0(E_0(\ell_3))
		+ \langle \ell_1,y_0\rangle  
		X_0	(\langle \ell_2,y_0\rangle) E_0(\ell_3) \\
		&-\langle \ell_1,y_0\rangle 
		X_0(\langle \ell_2,y_0\rangle) E_0(\ell_3)
		+\langle \ell_1,y_0\rangle \langle \ell_2,
		D_0y_0\rangle  E_0(\ell_3) \\
		=
		&\langle \ell_1,y_0\rangle  \langle \ell_2,y_0\rangle E_0(E_0(\ell_3))
		+\langle \ell_1,y_0\rangle \langle \ell_2,
		D_0y_0\rangle  E_0(\ell_3),
	\end{align*}
	where, in the next-to-last equality we used \eqref{LSA_prop_proof_eq1} and \eqref{LSA_prop_eq2}. 
	Now, using \eqref{LSA_prop_proof_eq2}, 
	we see that the associator 
	$(\ell_1,\ell_2,\ell_3)$ is symmetric with respect to $\ell_1$ and $\ell_2$, and this completes the proof.
	
	Finally, we check compatibility with the anchor $\rho^L$: 
	\begin{align*}
		\rho^L([\ell_1,\ell_2]) =  &\rho^L(\ell_1\cdot  \ell_2-\ell_2\cdot \ell_1)\\
		=
		&\langle \ell_1\cdot \ell_2-\ell_2\cdot \ell_1,y_0\rangle
		X_0\\
		=
		&(\langle \ell_1,y_0\rangle\langle E_0(\ell_2),y_0\rangle-
		\langle \ell_2,y_0\rangle\langle E_0(\ell_1),y_0\rangle)
		X_0\\
		\mathop{=}\limits^{\eqref{LSA_prop_proof_eq2}}
		&(\langle \ell_1,y_0\rangle \langle E_0(\ell_2),y_0\rangle
		+ \langle \ell_1,y_0\rangle \langle \ell_2,
		D_0y_0\rangle\\
		&-\langle \ell_2,y_0\rangle \langle \ell_1,
		D_0y_0\rangle
		-\langle \ell_2,y_0\rangle\langle E_0(\ell_1),y_0\rangle)
		X_0\\
		=
		&(\langle \ell_1,y_0\rangle 
	    X_0(\langle \ell_2,y_0\rangle)
		-\langle \ell_2,y_0\rangle
		X_0(\langle \ell_1,y_0\rangle)) 
		X_0\\
		\mathop{=}\limits^{\eqref{cyc_proof_eq1}}
		&
		[\rho^L(\ell_1),\rho^L(\ell_2)].
	\end{align*}
	This completes the proof. 
\end{proof}

\begin{corollary}
	\label{LSA_cor1}
	In the setting of Theorem~\ref{LSA_prop}, 
	$L$ is a $(\KK,R)$-Lie algebra with respect to 
	the bracket defined by 
	\begin{equation}
	\label{LSA_cor1_eq1}
	[\ell_1,\ell_2]:=\nabla_{\ell_1}\ell_2-\nabla_{\ell_2}\ell_1
	\end{equation}
	for all $\ell_1,\ell_2\in L$. 
\end{corollary}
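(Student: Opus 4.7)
The plan is to invoke the general transfer principle from pre-Lie-Rinehart algebras to Lie-Rinehart algebras recorded in Remark~\ref{preLR_rem}\eqref{preLR_rem_item1}. Since Theorem~\ref{LSA_prop} has already established that the mapping $\nabla$ endows $L$ with the structure of a pre-Lie-Rinehart algebra with product $\ell_1 \cdot \ell_2 = \nabla_{\ell_1}\ell_2 = \langle \ell_1, y_0\rangle E_0(\ell_2)$ and anchor $\rho^L$, the antisymmetrized bracket~\eqref{LSA_cor1_eq1} is exactly the Lie bracket associated to this pre-Lie-Rinehart structure.

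Concretely, I would verify the two defining axioms of a $(\KK,R)$-Lie algebra in Definition~\ref{LR_def}. For the Jacobi identity, I would invoke the identity between the Jacobiator of $[\cdot,\cdot]$ and the left and right associators quoted in Remark~\ref{preLR_rem}\eqref{preLR_rem_item1}: the left-symmetry of the associator $(\ell_1,\ell_2,\ell_3) = (\ell_2,\ell_1,\ell_3)$, which was verified in the proof of Theorem~\ref{LSA_prop}, makes the right-hand side of that identity vanish. For condition~\eqref{LR_def_item1} (the anchor is a Lie algebra homomorphism), I would simply quote the last computation in the proof of Theorem~\ref{LSA_prop}, which established $\rho^L([\ell_1,\ell_2]) = [\rho^L(\ell_1),\rho^L(\ell_2)]$.

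For the Leibniz-type property~\eqref{LR_def_eq1}, I would compute directly:
\begin{align*}
[\ell_1, r\cdot\ell_2]
&= \nabla_{\ell_1}(r\cdot\ell_2) - \nabla_{r\cdot\ell_2}\ell_1 \\
&= r\cdot \nabla_{\ell_1}\ell_2 + (\rho^L(\ell_1)r)\cdot\ell_2 - r\cdot\nabla_{\ell_2}\ell_1 \\
&= r\cdot[\ell_1,\ell_2] + (\rho^L(\ell_1)r)\cdot\ell_2,
\end{align*}
where the first term in the middle line uses $\nabla_{\ell_1} \in \widetilde{\mathcal{CDO}}(L)$ with associated derivation $\rho^L(\ell_1)$, and the cancellation of $\nabla_{r\cdot\ell_2}\ell_1 = r\cdot\nabla_{\ell_2}\ell_1$ uses the $R$-linearity of $\nabla\colon L \to \mathcal{CDO}(L)$. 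There is no real obstacle here, since all the analytical work was already carried out in Theorem~\ref{LSA_prop}; the corollary is a packaging step that reinterprets that pre-Lie-Rinehart structure through its skew-symmetrized commutator.
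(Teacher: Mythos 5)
Your proposal is correct and follows essentially the same route as the paper, which simply cites Theorem~\ref{LSA_prop}, Definition~\ref{preLR_def}, and Remark~\ref{preLR_rem}; you merely spell out the details (the Jacobiator--associator identity, the anchor compatibility already proved in the theorem, and the Leibniz rule via $\nabla(\ell)\in\mathcal{CDO}(L)$ together with the $R$-linearity of $\nabla$) that the paper leaves implicit. All of these verifications are accurate.
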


\begin{proof}
	Use Theorem~\ref{LSA_prop},  
	Definition~\ref{preLR_def}, 
	and Remark~\ref{preLR_rem}. 
\end{proof}

\begin{remark}
	\label{eigenvalue}
	\normalfont 
	Condition~\eqref{LSA_prop_proof_eq2} is satisfied if for instance 
	$D_0y_0=cy_0$ 
	for some element 
	$c\in R$. 
\end{remark}

\begin{remark}
	\normalfont 
	In this way we obtain  $(\KK,R)$-Lie algebras of a special type, called pre-Lie-Rinehart algebras in \cite[Def. 2.9]{FMaMu21}. 
	See also \cite[Def. 2.8]{ChLL22}. 
	The differential geometric counterpart of this notion had been investigated in \cite{LShBCh16} under the name of left-symmetric algebroid.
\end{remark}

We now apply Theorem~\ref{LSA_prop} to the $R$-bilinear duality map $\langle\cdot,\cdot\rangle\colon\Omega^1_R\times T_R\to R$. 

\begin{corollary}
	\label{first}
	For the $R$-module $\Omega^1_R$ and $X,Y\in T_R$, 
	we define the anchor map 
	\begin{equation}
		\label{first_eq1}
		\rho\colon \Omega^1_R\to T_R, \quad \rho(\alpha):=\langle\alpha,Y\rangle X.
	\end{equation}
	If the $R$-bilinear mapping 
	\begin{equation}
		\label{first_eq2}
		\Omega^1_R\times \Omega^1_R\to R,\quad (\alpha_1,\alpha_2)\mapsto \langle\alpha_1,Y\rangle\langle\alpha_2,[X,Y]\rangle
	\end{equation}
	is symmetric, then the mapping 
	$$\Omega^1_R\times \Omega^1_R\to \Omega^1_R,\quad (\alpha_1,\alpha_2)\mapsto \alpha_1\cdot\alpha_2:=\nabla_{\alpha_1}\alpha_2:=\langle\alpha_1,Y\rangle\LD_X\alpha_2 
	$$
	defines the structure of a pre-Lie-Rinehart algebra on $\Omega^1_R$, 
	with its corresponding Lie-Rinehart bracket 
	\begin{equation}
		\label{first_eq3}
		\Omega^1_R\times \Omega^1_R\to \Omega^1_R,\quad [\alpha_1,\alpha_2]:=\langle\alpha_1,Y\rangle\LD_X\alpha_2 - \langle\alpha_2,Y\rangle\LD_X\alpha_1.
	\end{equation}
\end{corollary}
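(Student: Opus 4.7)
The plan is to reduce Corollary~\ref{first} to a direct application of Theorem~\ref{LSA_prop}, by making the following specialization: take $L:=\Omega^1_R$, $N:=T_R$, let $\langle\cdot,\cdot\rangle\colon\Omega^1_R\times T_R\to R$ be the evaluation pairing $\langle\alpha,Z\rangle:=\alpha(Z)$, and set
$$y_0:=Y,\quad X_0:=X,\quad D_0\colon T_R\to T_R,\ D_0(Z):=[X,Z],\quad E_0:=\LD_X\in\End_\KK(\Omega^1_R).$$
With these choices, the mapping $\rho^L$ defined in \eqref{LSA_prop_eq3} specializes to $\alpha\mapsto\langle\alpha,Y\rangle X$, matching \eqref{first_eq1}, and the product in \eqref{multiplication-1} specializes to $\alpha_1\cdot\alpha_2=\langle\alpha_1,Y\rangle\LD_X\alpha_2$, as required.

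First I would verify that $(D_0,X_0)\in\mathcal{CDO}(T_R)$: for every $r\in R$ and $Z\in T_R$ the computation in Example~\ref{tg} gives $[X,rZ]=X(r)Z+r[X,Z]$, which is precisely the defining relation $D_0(rZ)-rD_0(Z)=X_0(r)Z$. Next I would check the compatibility condition \eqref{LSA_prop_eq2}: by the very definition \eqref{LD1} of the Lie derivative, one has
$$X(\alpha(Z))=(\LD_X\alpha)(Z)+\alpha([X,Z])$$
for all $\alpha\in\Omega^1_R$ and $Z\in T_R$, which, under the evaluation pairing, reads $X_0(\langle\alpha,Z\rangle)=\langle E_0(\alpha),Z\rangle+\langle\alpha,D_0(Z)\rangle$, as needed. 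Finally, the symmetry hypothesis imposed on \eqref{LSA_prop_eq1} becomes, with our substitutions, precisely the symmetry hypothesis imposed on the bilinear mapping \eqref{first_eq2}, since $D_0(y_0)=[X,Y]$.

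Having matched all the hypotheses, Theorem~\ref{LSA_prop} yields the asserted pre-Lie-Rinehart algebra structure on $\Omega^1_R$ with anchor $\rho$ and multiplication $\alpha_1\cdot\alpha_2=\langle\alpha_1,Y\rangle\LD_X\alpha_2$. The bracket \eqref{first_eq3} is then obtained from Corollary~\ref{LSA_cor1}, which ensures that the antisymmetrization \eqref{LSA_cor1_eq1} of the pre-Lie-Rinehart product is a genuine Lie-Rinehart bracket with the same anchor.

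I do not expect any real obstacle: the entire content of the corollary is the recognition that $\LD_X$ is the endomorphism of $\Omega^1_R$ dual (in the sense of Lemma~\ref{LSA_lemma}) to the ad-type operator $[X,\cdot]$ on $T_R$ relative to the evaluation pairing, with the derivation $X$ itself playing the role of $X_0$. Once this dictionary is fixed, everything is a direct substitution into Theorem~\ref{LSA_prop}; the only nontrivial ingredient is the identity \eqref{LD1}, which is already available from the preliminaries.
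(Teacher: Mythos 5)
Your proposal is correct and follows essentially the same route as the paper: both specialize Theorem~\ref{LSA_prop} to $L=\Omega^1_R$, $N=T_R$ with the evaluation pairing, $y_0=Y$, $X_0=X$, $D_0=\ad_{T_R}X$, $E_0=\LD_X$, observe that condition~\eqref{LSA_prop_eq2} is exactly the definition~\eqref{LD1} of the Lie derivative, and match the symmetry hypothesis~\eqref{LSA_prop_eq1} with~\eqref{first_eq2} via $D_0(y_0)=[X,Y]$. Your explicit verification that $(D_0,X_0)\in\mathcal{CDO}(T_R)$ is a small detail the paper leaves implicit, but it does not change the argument.
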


\begin{proof}
	We consider the $R$-modules $L:=\Omega^1_R$ 
	and $N:=T_R$ 
	and 
	$$D_0:=\ad_{T_R}X\colon T_R\to T_R$$ 
	where $\ad_{T_R}X\colon T_R\to T_R$, 
	$(\ad_{T_R}X)(Y):=[X,Y]$,
is the adjoint representation of the Lie algebra $T_R=\Der(R)$.  
	Then the equation~\eqref{LSA_prop_eq2} is satisfied with the Lie derivative $$E_0:=\LD_X\colon \Omega^1_R\to\Omega^1_R$$ 
	 (condition~\eqref{LSA_prop_eq2} in this case comes down to the definition of the Lie derivative~\eqref{LD1})
	hence the assertion follows by Theorem~\ref{LSA_prop} and equation~\eqref{LSA_prop_proof_eq2}, 
	applied for $X_0:=X$ and $y_0:=Y$. 
\end{proof}

The following corollary is suggested by Remark~\ref{eigenvalue}. 

\begin{corollary}
	\label{first0}
	If $X,Y\in T_R$ and there exists $c\in R$ satisfying 
	\begin{equation}
		\label{first0_eq1}
		[X,Y]=cY
	\end{equation}
	then the bracket \eqref{first_eq3} gives a Lie-Rinehart algebra structure on $\Omega^1_R$ with the anchor~$\rho$ given by~\eqref{first_eq1}. 
\end{corollary}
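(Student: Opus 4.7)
The plan is to reduce Corollary~\ref{first0} directly to Corollary~\ref{first} by verifying its sole hypothesis (the symmetry of the $R$-bilinear form~\eqref{first_eq2}) under the assumption $[X,Y]=cY$.

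First, I would use the $R$-linearity of the duality pairing in the second slot to rewrite
\[
\langle \alpha_2, [X,Y]\rangle = \langle \alpha_2, cY\rangle = c\,\langle \alpha_2, Y\rangle,
\]
so that the mapping in~\eqref{first_eq2} becomes $(\alpha_1,\alpha_2)\mapsto c\,\langle \alpha_1,Y\rangle\,\langle \alpha_2,Y\rangle$. Since multiplication in $R$ is commutative, this expression is plainly symmetric in $\alpha_1$ and $\alpha_2$; this is the concrete incarnation, for the duality pairing $\Omega^1_R\times T_R\to R$, of the general observation recorded in Remark~\ref{eigenvalue} (with $y_0:=Y$, $X_0:=X$, $D_0:=\ad_{T_R}X$, and $E_0:=\LD_X$).

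Once the symmetry is in hand, Corollary~\ref{first} supplies a pre-Lie-Rinehart algebra structure on $\Omega^1_R$ with anchor $\rho$ given by~\eqref{first_eq1} and product $\alpha_1\cdot\alpha_2=\langle \alpha_1,Y\rangle\LD_X\alpha_2$. Antisymmetrizing this product yields precisely the bracket~\eqref{first_eq3}, and by Corollary~\ref{LSA_cor1} (whose proof rests on Remark~\ref{preLR_rem}(i)) this antisymmetrization is a genuine Lie-Rinehart bracket on $\Omega^1_R$ with anchor $\rho$. No step presents a real obstacle: the entire content of the corollary is the remark that condition~\eqref{first0_eq1} forces $Y$ to be an ``eigenvector'' of $\ad_{T_R}X$ with eigenvalue $c\in R$, which by the commutativity of $R$ trivially produces the required symmetry and allows the abstract machinery of Theorem~\ref{LSA_prop} to apply verbatim.
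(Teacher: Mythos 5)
Your proposal is correct and follows essentially the same route as the paper: the paper's proof simply notes that \eqref{first0_eq1} makes the form \eqref{first_eq2} symmetric (the step you spell out via $\langle\alpha_2,cY\rangle=c\langle\alpha_2,Y\rangle$ and commutativity of $R$) and then invokes Corollary~\ref{first}. Your explicit appeal to Corollary~\ref{LSA_cor1} to pass from the pre-Lie-Rinehart product to the bracket \eqref{first_eq3} is just a fuller account of what the paper leaves implicit.
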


\begin{proof}
	If \eqref{first0_eq1} is satisfied, then it is easily seen that the mapping \eqref{first_eq2} is symmetric, hence the assertion follows by Corollary~\ref{first}. 
\end{proof}

The examples of Lie-Rinehart brackets provided by Corollary~\ref{first0}  were studied  for Lie-Rinehart algebras associated to Lie algebroids in \cite{DJ21}, see also \cite{DJS22}.


\begin{theorem}
	\label{LSA_prop_new}
	Assume that we have the following data: 
	\begin{itemize}
		\item $\langle\cdot,\cdot\rangle\colon L\times N\to R$  is a duality pairing of  two $R$-modules $L$ and $N$.
\item $(D_0,X_0)\in\mathcal{CDO}(N)$, $y_0\in N$, and $E_0\in\End_\KK(L)$ 
satisfying the conditions that the mapping 
		\begin{equation}
			\label{LSA_prop_new_eq1}
			L\times L\to R, \quad (\ell_1,\ell_2)\mapsto \langle\ell_1,y_0\rangle \langle\ell_2,
			(D_0)^2
			(y_0)\rangle\text{ is symmetric }
		\end{equation}
		and 
		\begin{equation}
			\label{LSA_prop_new_eq2}
			(\forall \ell\in L)(\forall n\in N)\quad 
			X_0(\langle \ell,n\rangle)
			=\langle E_0(\ell),n\rangle+\langle \ell,
			D_0(n)\rangle.
		\end{equation}
	\end{itemize}
	If we define the anchor 
	\begin{equation}
		\label{LSA_prop_new_eq3}
		\rho^L\colon L\to T_R,\quad \rho^L(\ell):=\langle \ell,y_0\rangle 
		X_0
	\end{equation}
	then the mapping 
	$$\nabla\colon L\to \mathcal{CDO}(L),\quad \nabla(\ell):=
	(\langle \ell,y_0\rangle E_0-\langle\ell,
	D_0(y_0)\rangle\id_L,\rho^L(\ell))
	\in\widetilde{\mathcal{CDO}}(L)\times T_R
	$$
	is a well-defined $L$-connection on the anchored module $L$ with its anchor $\rho^L$ 
	and the mapping 
	\begin{equation}
	\label{multiplication-2}
    L\times L\to L,\quad (\ell_1,\ell_2)\mapsto \ell_1\cdot \ell_2:=\nabla_{\ell_1}\ell_2= \langle \ell_1,y_0\rangle E_0(\ell_2)-\langle\ell_1,
	D_0 (y_0)\rangle\ell_2
	\end{equation}
	defines the structure of a pre-Lie-Rinehart algebra on $L$. 
\end{theorem}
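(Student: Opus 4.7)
The strategy runs parallel to that of Theorem~\ref{LSA_prop}, but the modified connection introduces a $-\langle\ell,D_0(y_0)\rangle\id_L$ correction whose interaction with the associator must be carefully tracked. To streamline the bookkeeping I would introduce the shorthand $a(\ell):=\langle\ell,y_0\rangle$, $b(\ell):=\langle\ell,D_0(y_0)\rangle$, and $c(\ell):=\langle\ell,D_0^2(y_0)\rangle$, each of which depends $R$-linearly on $\ell$; in this notation the multiplication~\eqref{multiplication-2} reads $\ell_1\cdot\ell_2 = a(\ell_1) E_0(\ell_2) - b(\ell_1)\ell_2$, and the symmetry hypothesis~\eqref{LSA_prop_new_eq1} becomes $a(\ell_1)c(\ell_2)=a(\ell_2)c(\ell_1)$.

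First I would check that $\nabla$ is a well-defined $L$-connection. By Lemma~\ref{LSA_lemma}, condition~\eqref{LSA_prop_new_eq2} combined with $(D_0,X_0)\in\mathcal{CDO}(N)$ yields $(E_0,X_0)\in\mathcal{CDO}(L)$, so $E_0(r\ell)=rE_0(\ell)+X_0(r)\ell$ for all $r\in R$ and $\ell\in L$. Since multiplication by $a(\ell)\in R$ and by $-b(\ell)\id_L$ is $R$-linear on $L$, a direct computation gives $[\nabla_\ell,\mu^L_r]=a(\ell)X_0(r)\,\id_L=\mu^L_{\rho^L(\ell)(r)}$, placing $\nabla(\ell)$ in $\mathcal{CDO}(L)$ with the correct anchor; the $R$-linearity of $a$ and $b$ in $\ell$ then makes $\nabla\colon L\to\mathcal{CDO}(L)$ itself $R$-linear, and $\rho^{\mathcal{CDO}(L)}\circ\nabla=\rho^L$ holds by construction.

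The heart of the proof is the associator computation. Pairing~\eqref{LSA_prop_new_eq2} with $n=y_0$ and with $n=D_0(y_0)$ yields
\begin{align*}
a(\ell_1\cdot\ell_2) &= a(\ell_1)\bigl(X_0(a(\ell_2))-b(\ell_2)\bigr)-b(\ell_1)a(\ell_2),\\
b(\ell_1\cdot\ell_2) &= a(\ell_1)\bigl(X_0(b(\ell_2))-c(\ell_2)\bigr)-b(\ell_1)b(\ell_2).
\end{align*}
Expanding $\ell_1\cdot(\ell_2\cdot\ell_3)$ using $E_0(f\ell)=fE_0(\ell)+X_0(f)\ell$ and subtracting $(\ell_1\cdot\ell_2)\cdot\ell_3$, all cross terms involving $X_0(a(\ell_2))E_0(\ell_3)$, $a(\ell_1)b(\ell_2)E_0(\ell_3)$, $b(\ell_1)a(\ell_2)E_0(\ell_3)$, $a(\ell_1)X_0(b(\ell_2))\ell_3$, and $b(\ell_1)b(\ell_2)\ell_3$ cancel pairwise, leaving
\[
(\ell_1,\ell_2,\ell_3)=a(\ell_1)a(\ell_2)E_0^2(\ell_3)-a(\ell_1)c(\ell_2)\ell_3.
\]
The first summand is automatically symmetric in $(\ell_1,\ell_2)$, while symmetry of the second is precisely the hypothesis~\eqref{LSA_prop_new_eq1}.

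It then remains to check anchor compatibility $\rho^L([\ell_1,\ell_2])=[\rho^L(\ell_1),\rho^L(\ell_2)]$. Using the formula for $a(\ell_1\cdot\ell_2)$ above, the $b$-terms cancel and one obtains $\rho^L([\ell_1,\ell_2])=\bigl(a(\ell_1)X_0(a(\ell_2))-a(\ell_2)X_0(a(\ell_1))\bigr)X_0$, while identity~\eqref{cyc_proof_eq1} applied to $[a(\ell_1)X_0,a(\ell_2)X_0]$ produces the same expression. The main obstacle is the algebraic bookkeeping in the associator step: one has to identify the cancellations produced by the new $-b(\ell)\id_L$ correction, and it is exactly these cancellations that permit the symmetry requirement to be weakened from one involving $D_0(y_0)$ (as in~\eqref{LSA_prop_proof_eq2}) to the condition involving $D_0^2(y_0)$ in~\eqref{LSA_prop_new_eq1}.
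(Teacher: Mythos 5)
Your proposal is correct and follows essentially the same route as the paper's proof: well-definedness of $\nabla$ via Lemma~\ref{LSA_lemma}, direct expansion of the associator to the two-term expression $\langle\ell_1,y_0\rangle\langle\ell_2,y_0\rangle E_0^2(\ell_3)-\langle\ell_1,y_0\rangle\langle\ell_2,D_0^2(y_0)\rangle\ell_3$ whose symmetry is exactly hypothesis~\eqref{LSA_prop_new_eq1}, and the anchor compatibility check in which the $\langle\cdot,D_0(y_0)\rangle$-terms cancel. The shorthand $a,b,c$ and the precomputed formulas for $a(\ell_1\cdot\ell_2)$ and $b(\ell_1\cdot\ell_2)$ merely streamline the same bookkeeping the paper carries out in full.
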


\begin{proof}
We first check that $\nabla$ is a well-defined $L$-connection on the anchored module~$L$. 
To this end we first note that, since $(D_0,X_0)\in\mathcal{CDO}(N)$, 
it follows by the hypothesis~\ref{LSA_prop_new_eq2} along  with Lemma~\ref{LSA_lemma} that 
\begin{equation}
	\label{LSA_prop_new_proof_eq1}
(\forall\ell\in L)(\forall r\in R) \quad E_0(r\ell)=
X_0(r)\ell+r E_0(\ell). 
\end{equation}
Now, for every $\ell_1,\ell_2\in L$  and $r\in R$ we obtain 
\begin{align*}
\nabla_{\ell_1}(r\ell_2)
&=\langle\ell_1,y_0\rangle
E_0(r\ell_2) -r\langle\ell_1,
D_0(y_0)\rangle\ell_2 \\
& \mathop{=}\limits^{\eqref{LSA_prop_new_proof_eq1}}
\langle\ell_1,y_0\rangle 
X_0(r)\ell_2
+r\langle\ell_1,y_0\rangle E_0(\ell_2)
-r\langle\ell_1,
D_0(y_0)\rangle\ell_2 \\
&=r\nabla_{\ell_1}\ell_2+(\rho^L(\ell_1))(r)\ell_2
\end{align*}
which shows that $\nabla(\ell_1)=(\nabla_{\ell_1},\rho^L(\ell_1))\in\mathcal{CDO}(L)$. 
It is clear that the mapping $\ell\mapsto\nabla(\ell)$ is $R$-linear, hence 
$\nabla $ is indeed a well-defined $L$-connection on the anchored module $L$ with its anchor $\rho^L$. 

Moreover, using the notation 
$\ell_1\cdot\ell_2=\nabla_{\ell_1}\ell_2$, 
 we have
\allowdisplaybreaks
\begin{align*}
	 (\ell_1,\ell_2,\ell_3 )=&\ell_1\cdot  (\ell_2\cdot\ell_3)-(\ell_1\cdot\ell_2)\cdot\ell_3 \\
	=& 
	\nabla_{\ell_1}(\nabla_{\ell_2}\ell_3)-\nabla_{\nabla_{\ell_1}\ell_2}\ell_3 \\
	=&
	\langle\ell_1,y_0\rangle E_0(\langle\ell_2,y_0\rangle E_0(\ell_3) -\langle\ell_2,
	D_0(y_0)\rangle\ell_3) \\
	&-\langle\ell_1,
	D_0(y_0)\rangle(\langle\ell_2,y_0\rangle E_0(\ell_3) -\langle\ell_2,
	D_0(y_0)\rangle\ell_3) \\
	&-\langle\langle\ell_1,y_0\rangle E_0(\ell_2) -\langle\ell_1,
	D_0(y_0)\rangle\ell_2,y_0\rangle 
	E_0(\ell_3) \\
	&+\langle\langle\ell_1,y_0\rangle E_0(\ell_2) -\langle\ell_1,
	D_0(y_0)\rangle\ell_2,
	D_0(y_0)\rangle 
	\ell_3 \\
	\mathop{=}\limits^{\eqref{LSA_prop_new_proof_eq1}}
	& \langle\ell_1,y_0\rangle 
	\Bigl(\langle\ell_2,y_0\rangle 
	(E_0)^2(\ell_3) 
	+
	X_0(\langle\ell_2,y_0\rangle)E_0(\ell_3) 
	- \langle\ell_2,
	D_0(y_0)\rangle E_0(\ell_3) \\
	& -
	X_0(\langle\ell_2,
	D_0(y_0)\rangle)\ell_3\Bigr)\\
	&-\langle\ell_1,
	D_0(y_0)\rangle\langle\ell_2,y_0\rangle E_0(\ell_3)
	+\langle\ell_1,
	D_0(y_0)\rangle \langle\ell_2,
	D_0(y_0)\rangle\ell_3 \\
	&-\langle\ell_1,y_0\rangle \langle E_0(\ell_2),y_0\rangle E_0(\ell_3)   +\langle\ell_1,
	D_0(y_0)\rangle \langle\ell_2,y_0\rangle 
	E_0(\ell_3) 
	\\
	&+\langle\ell_1,y_0\rangle \langle E_0(\ell_2),
	D_0(y_0)\rangle\ell_3   
	-\langle\ell_1,
	D_0(y_0)\rangle \langle\ell_2,
	D_0(y_0)\rangle 
	\ell_3 
	\\
	=& \langle\ell_1,y_0\rangle \langle\ell_2,y_0\rangle (E_0)^2(\ell_3) 
	+\langle\ell_1,y_0\rangle 
	X_0(\langle\ell_2,y_0\rangle) E_0(\ell_3) \\
	&-\langle\ell_1,y_0\rangle \langle\ell_2,
	D_0(y_0)\rangle E_0(\ell_3)\\
	&-\langle\ell_1,y_0\rangle 
	X_0(\langle\ell_2,
	D_0(y_0)\rangle)\ell_3 
	-\langle\ell_1,y_0\rangle  \langle E_0(\ell_2),y_0\rangle E_0(\ell_3)   
	\\
	&+\langle\ell_1,y_0\rangle\langle E_0(\ell_2),
	D_0(y_0)\rangle\ell_3   
	\\
	=&\langle\ell_1,y_0\rangle \langle\ell_2,y_0\rangle (E_0)^2(\ell_3) \\
	&+\langle\ell_1,y_0\rangle \Bigl(
	X_0(\langle\ell_2,y_0\rangle)
	-\langle E_0(\ell_2),y_0\rangle
	-\langle\ell_2,
	D_0(y_0)\rangle\Bigr)E_0(\ell_3)
	\\
	&-\langle\ell_1,y_0\rangle 
	\Bigl( 
	X_0( \langle\ell_2,
	D_0(y_0)\rangle)-\langle E_0(\ell_2),
	D_0(y_0)\rangle\Bigr)\ell_3. 
\end{align*}
By \eqref{LSA_prop_new_eq2} we have 
$$
X_0(\langle\ell_2,y_0\rangle)
 =\langle E_0 (\ell_2),y_0\rangle+\langle\ell_2,
 D_0(y_0)\rangle$$
and 
$$
X_0(\langle\ell_2,
D_0(y_0)\rangle)
=\langle E_0(\ell_2),
D_0(y_0)\rangle+\langle\ell_2,
(D_0)^2(y_0)\rangle$$ 
hence we further obtain 
\begin{align*}
	(\ell_1,\ell_2,\ell_3)= & \ell_1\cdot  (\ell_2\cdot\ell_3)-(\ell_1\cdot\ell_2)\cdot\ell_3 \\
	=&\langle\ell_1,y_0\rangle \langle\ell_2,y_0\rangle (E_0)^2(\ell_3) 
	-\langle\ell_1,y_0\rangle\langle\ell_2,
	(D_0)^2(y_0)\rangle\ell_3.
\end{align*}
Thus, by the symmetry hypothesis \eqref{LSA_prop_new_eq1}, we obtain that the associator  
$(\ell_1,\ell_2,\ell_3)$ 
is symmetric in $\ell_1$ and $\ell_2$.

	Finally, we check compatibility with the anchor $\rho^L$. 
	Since $\rho^L(\ell)=\langle \ell,y_0\rangle X_0\in T_R$ 
	and $\ell_1\cdot \ell_2=\langle \ell_1,y_0\rangle  E_0(\ell_2)-\langle \ell_1,D_0(y_0)\rangle\ell_2\in L $, we obtain 
\begin{align*}
	\rho^L(\ell_1\cdot \ell_2-\ell_2\cdot \ell_1)
	=
	&\langle \ell_1\cdot \ell_2-\ell_2\cdot \ell_1,y_0\rangle
	X_0\\
	=
	&\bigl((\langle \ell_1,y_0\rangle \langle E_0(\ell_2),y_0\rangle-
	\langle \ell_2,y_0\rangle\langle E_0(\ell_1),y_0\rangle)\\
	&-(\langle \ell_1,D_0(y_0)\rangle\langle \ell_2,y_0\rangle-
	\langle \ell_2,D_0(y_0)\rangle\langle \ell_1,y_0\rangle)\bigr)
	X_0.
\end{align*}
On the other hand, 
\begin{align*}
\rho^L(\ell_1)\rho^L(\ell_2)
=
&\langle \ell_1,y_0\rangle X_0(\langle \ell_2,y_0\rangle X_0) \\
=
&\langle \ell_1,y_0\rangle X_0(\langle \ell_2,y_0\rangle) X_0 
+\langle \ell_1,y_0\rangle \langle\ell_2,y_0\rangle (X_0)^2 \\
\mathop{=}\limits^{\eqref{LSA_prop_new_eq2}}
&
\langle \ell_1,y_0\rangle (\langle E_0(\ell_2),y_0\rangle+\langle \ell_2,D_0(y_0)\rangle)X_0
+\langle \ell_1,y_0\rangle \langle\ell_2,y_0\rangle (X_0)^2
\end{align*}
hence, comparing this equality with the above one, we see that 
$$	 \rho^L([\ell_1, \ell_2])=
\rho^L(\ell_1\cdot \ell_2-\ell_2\cdot \ell_1)=\rho^L(\ell_1)\rho^L(\ell_2)-\rho^L(\ell_2)\rho^L(\ell_1)
 =[\rho^L(\ell_1), \rho^L(\ell_2)]
.$$
This completes the proof. 
\end{proof}

\begin{remark}
\normalfont 
	In connection with the proof of Theorem~\ref{LSA_prop_new}, 
		we note that the compatibility of the anchor $\rho$ with the bracket $[\cdot,\cdot]$ on $L$ does not need the symmetry hypothesis~\eqref{LSA_prop_new_eq1}.
\end{remark}

We now apply Theorem~\ref{LSA_prop_new} for the $R$-bilinear duality map $\langle\cdot,\cdot\rangle\colon \Omega^1_R\times T_R\to R$. 

\begin{corollary}
\label{Bucharest}
For the $R$-module $\Omega^1_R$ and $X,Y\in T_R$, 
we define the anchor map 
\begin{equation}
	\label{Bucharest_eq1}
	\rho\colon \Omega^1_R\to T_R, \quad \rho(\alpha):=\langle\alpha,Y\rangle X.
\end{equation}
If the $R$-bilinear mapping 
\begin{equation}
	\label{Bucharest_eq2}
\Omega^1_R\times \Omega^1_R\to R,\quad (\alpha_1,\alpha_2)\mapsto \langle\alpha_1,Y\rangle\langle\alpha_2,[X,[X,Y]]\rangle
\end{equation}
is symmetric, then the mapping 
$$\Omega^1_R\times \Omega^1_R\to \Omega^1_R,\quad (\alpha_1,\alpha_2)\mapsto \alpha_1\cdot\alpha_2:=\nabla_{\alpha_1}\alpha_2:=\langle\alpha_1,Y\rangle\LD_X\alpha_2 -\langle\alpha_1,[X,Y]\rangle\alpha_2$$
defines the structure of a pre-Lie-Rinehart algebra on $\Omega^1_R$. 
\end{corollary}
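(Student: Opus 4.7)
The plan is to apply Theorem~\ref{LSA_prop_new} directly to the canonical evaluation pairing $\langle\cdot,\cdot\rangle\colon \Omega^1_R\times T_R\to R$, $\langle\alpha,Z\rangle:=\alpha(Z)$. The nondegeneracy of this pairing follows from the definitions of $T_R$ and $\Omega^1_R$. I would specialize the data as follows: take $L:=\Omega^1_R$, $N:=T_R$, $X_0:=X$, $y_0:=Y$, $D_0:=\ad_{T_R}X$ (so $D_0(Z)=[X,Z]$, hence $D_0(y_0)=[X,Y]$ and $(D_0)^2(y_0)=[X,[X,Y]]$), and $E_0:=\LD_X\colon\Omega^1_R\to\Omega^1_R$.

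The first step is to verify that $(D_0,X_0)\in\mathcal{CDO}(T_R)$. For $r\in R$ and $Z\in T_R$ the identity $[X,rZ]=X(r)\,Z+r[X,Z]$ (a direct consequence of the Leibniz rule in $R$) shows that $[\ad_{T_R}X,\mu^{T_R}(r)]=\mu^{T_R}(X(r))$, as required by Definition~\ref{CDO_def}.

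The second step is to verify the compatibility condition~\eqref{LSA_prop_new_eq2}, which in the present setting reads
\[
X(\alpha(Z))=(\LD_X\alpha)(Z)+\alpha([X,Z]) \quad \text{for all }\alpha\in\Omega^1_R,\ Z\in T_R,
\]
and this is exactly the defining equation~\eqref{LD1} of the Lie derivative. The third step is to observe that the symmetry hypothesis~\eqref{LSA_prop_new_eq1} becomes, via the identification $(D_0)^2(y_0)=[X,[X,Y]]$, precisely the symmetry of the mapping~\eqref{Bucharest_eq2}, which is assumed in the hypotheses of the corollary.

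All the hypotheses of Theorem~\ref{LSA_prop_new} are therefore satisfied. Its conclusion yields that the anchor $\rho^L(\alpha)=\langle\alpha,Y\rangle X_0=\langle\alpha,Y\rangle X$ agrees with~\eqref{Bucharest_eq1}, and the multiplication~\eqref{multiplication-2} takes the form
\[
\alpha_1\cdot\alpha_2=\langle\alpha_1,Y\rangle E_0(\alpha_2)-\langle\alpha_1,D_0(y_0)\rangle\alpha_2=\langle\alpha_1,Y\rangle\LD_X\alpha_2-\langle\alpha_1,[X,Y]\rangle\alpha_2,
\]
endowing $\Omega^1_R$ with the claimed pre-Lie-Rinehart algebra structure. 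There is no real obstacle here; the corollary is a bookkeeping exercise, the only point requiring a moment of care being the correct identification of $(D_0)^2(y_0)$ with the iterated bracket $[X,[X,Y]]$ appearing in~\eqref{Bucharest_eq2}.
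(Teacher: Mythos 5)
Your proof is correct and follows exactly the same route as the paper's: both specialize Theorem~\ref{LSA_prop_new} to $L=\Omega^1_R$, $N=T_R$, $X_0=X$, $y_0=Y$, $D_0=\ad_{T_R}X$, $E_0=\LD_X$, with \eqref{LSA_prop_new_eq2} reducing to the defining equation \eqref{LD1} of the Lie derivative; you merely spell out verifications that the paper leaves implicit. The only point stated too quickly (by the paper as well) is the nondegeneracy condition \eqref{duality_def_r} for the evaluation pairing $\Omega^1_R\times T_R\to R$, which is not automatic for a general $R$; fortunately the proof of Theorem~\ref{LSA_prop_new} only uses the direction \eqref{duality_def_l}, which does hold trivially for $L=\Hom_R(T_R,R)$.
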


\begin{proof}
As in the proof of Corollary~\ref{first}, we consider the $R$-modules $L:=\Omega^1_R$ 
and $N:=T_R$. 
Then equation~\eqref{LSA_prop_new_eq2} is satisfied for the Lie derivative $E_0:=\LD_X\colon \Omega^1_R\to \Omega^1_R$ 
hence the assertion follows by Theorem~\ref{LSA_prop_new}, 
applied for $D_0:=\ad_{T_R}X=[X,\cdot]$ and $y_0:=Y$. 
\end{proof}

\begin{corollary}
	\label{Bucharest_cor1}
Under the hypothesis of Corollary~\ref{Bucharest}, 
if we define 
\begin{equation}\label{Bucharest_cor1_eq1}
[\alpha_1,\alpha_2]:=\langle\alpha_1,Y\rangle\LD_X\alpha_2-\langle\alpha_2,Y\rangle\LD_X\alpha_1
-\langle\alpha_1,[X,Y]\rangle\alpha_2+\langle\alpha_2,[X,Y]\rangle\alpha_1
\end{equation}
for all $\alpha_1,\alpha_2\in \Omega^1_R$, 
then we obtain a Lie-Rinehart algebra structure on $\Omega^1_R$ with the anchor $\rho$ given by \eqref{Bucharest_eq1}. 
\end{corollary}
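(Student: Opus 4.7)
The plan is to observe that this corollary is a direct consequence of Corollary~\ref{Bucharest} combined with the general fact that any pre-Lie-Rinehart algebra yields a Lie-Rinehart algebra by anti-symmetrization of its product (cf.\ Corollary~\ref{LSA_cor1} and Remark~\ref{preLR_rem}\eqref{preLR_rem_item1}). So the only real task is to check that the anti-symmetrization of the pre-Lie-Rinehart product furnished by Corollary~\ref{Bucharest} equals the formula~\eqref{Bucharest_cor1_eq1}.

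Concretely, I would first apply Corollary~\ref{Bucharest} under the given symmetry hypothesis on the mapping~\eqref{Bucharest_eq2} to equip $\Omega^1_R$ with the pre-Lie-Rinehart product
\[
\alpha_1\cdot\alpha_2=\langle\alpha_1,Y\rangle\LD_X\alpha_2-\langle\alpha_1,[X,Y]\rangle\alpha_2,
\]
whose anchor is the map $\rho$ in~\eqref{Bucharest_eq1}. Next, by Remark~\ref{preLR_rem}\eqref{preLR_rem_item1}, the bracket
\[
[\alpha_1,\alpha_2]:=\alpha_1\cdot\alpha_2-\alpha_2\cdot\alpha_1
\]
endows $\Omega^1_R$ with a Lie-Rinehart algebra structure, the anchor of which is still~$\rho$ (since anti-symmetrization preserves the anchor, as already built into Definition~\ref{preLR_def}).

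Finally, a one-line expansion gives
\[
\alpha_1\cdot\alpha_2-\alpha_2\cdot\alpha_1
=\langle\alpha_1,Y\rangle\LD_X\alpha_2-\langle\alpha_2,Y\rangle\LD_X\alpha_1
-\langle\alpha_1,[X,Y]\rangle\alpha_2+\langle\alpha_2,[X,Y]\rangle\alpha_1,
\]
which matches~\eqref{Bucharest_cor1_eq1} verbatim. Since all the substantive verifications (associator left symmetry and compatibility with the anchor) were already carried out in the proof of Theorem~\ref{LSA_prop_new} and then specialized in Corollary~\ref{Bucharest}, there is no genuine obstacle here; the present statement is essentially a reformulation in terms of the Lie-Rinehart bracket rather than the pre-Lie-Rinehart product.
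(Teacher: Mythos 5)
Your proposal is correct and follows essentially the same route as the paper: the paper's own proof simply observes that the bracket \eqref{Bucharest_cor1_eq1} is the anti-symmetrization $\nabla_{\alpha_1}\alpha_2-\nabla_{\alpha_2}\alpha_1$ of the pre-Lie-Rinehart product from Corollary~\ref{Bucharest}, so the conclusion follows from Remark~\ref{preLR_rem}\eqref{preLR_rem_item1}. Your additional remarks about the anchor being preserved and the one-line expansion are accurate elaborations of the same argument.
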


\begin{proof}
It suffices to note that, with the notation of Corollary~\ref{Bucharest}, 
we have $ [\alpha_1,\alpha_2]=\nabla_{\alpha_1}\alpha_2-\nabla_{\alpha_2}\alpha_1$. 
\end{proof}

\begin{remark}
\normalfont
Let us further note that the form of bracket~\eqref{Bucharest_cor1_eq1} consists of two fragments: 
\begin{align*}
& [\alpha_1,\alpha_2]_1=\langle\alpha_1,Y\rangle\LD_X\alpha_2
-\langle\alpha_2,Y\rangle\LD_X\alpha_1,\\
& [\alpha_1,\alpha_2]_2=-\langle\alpha_1,[X,Y]\rangle\alpha_2+\langle\alpha_2,[X,Y]\rangle\alpha_1,
\end{align*} 
each of which resembles the Lie bracket of the Lie algebra of a generalized $ax+b$-group, see \cite{BB18,DJ23}.
\end{remark}

\begin{corollary}
	\label{Bucharest_cor2}
If $X,Y\in T_R$ and there exists $c\in R$ satisfying 
\begin{equation}
		\label{Bucharest_cor2_eq1}
[X,[X,Y]]=cY
\end{equation}
then the bracket \eqref{Bucharest_cor1_eq1} gives a Lie-Rinehart algebra structure on $\Omega^1_R$ with the anchor~$\rho$ given by~\eqref{Bucharest_eq1}. 
\end{corollary}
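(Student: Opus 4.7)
The plan is to deduce this directly from Corollary~\ref{Bucharest_cor1}: the only hypothesis of that corollary that is not automatic is the symmetry of the $R$-bilinear mapping~\eqref{Bucharest_eq2}, so the entire argument reduces to verifying that symmetry under the additional assumption $[X,[X,Y]]=cY$.

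To carry this out, I would simply substitute the hypothesis into the expression appearing in~\eqref{Bucharest_eq2}. For any $\alpha_1,\alpha_2\in\Omega^1_R$, using $R$-bilinearity of the duality pairing $\langle\cdot,\cdot\rangle\colon\Omega^1_R\times T_R\to R$ together with the commutativity of $R$, I would compute
\[
\langle\alpha_1,Y\rangle\langle\alpha_2,[X,[X,Y]]\rangle
=\langle\alpha_1,Y\rangle\langle\alpha_2,c\,Y\rangle
=c\,\langle\alpha_1,Y\rangle\langle\alpha_2,Y\rangle,
\]
which is manifestly symmetric in $\alpha_1$ and $\alpha_2$. Hence the hypothesis of Corollary~\ref{Bucharest_cor1} is satisfied, and the bracket~\eqref{Bucharest_cor1_eq1} defines a Lie-Rinehart algebra structure on $\Omega^1_R$ with anchor~$\rho$ from~\eqref{Bucharest_eq1}.

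Conceptually, this is the exact analogue of the passage from Corollary~\ref{first} to Corollary~\ref{first0} via Remark~\ref{eigenvalue}: the abstract symmetry requirement in Theorem~\ref{LSA_prop_new} is automatically fulfilled whenever $(D_0)^2(y_0)$ is an $R$-multiple of $y_0$, and, with the choice $D_0=\ad_{T_R}X$ and $y_0=Y$ used in Corollary~\ref{Bucharest}, this eigenvalue condition reads precisely $[X,[X,Y]]=cY$. There is no real obstacle to the proof—the content of the statement lies entirely in Theorem~\ref{LSA_prop_new} and its corollary, and the present result is merely the observation that this convenient sufficient condition makes the symmetry hypothesis manifest.
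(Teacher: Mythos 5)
Your proposal is correct and follows exactly the paper's own argument: the hypothesis $[X,[X,Y]]=cY$ makes the mapping~\eqref{Bucharest_eq2} equal to $c\,\langle\alpha_1,Y\rangle\langle\alpha_2,Y\rangle$, which is manifestly symmetric, so Corollary~\ref{Bucharest_cor1} applies. You merely spell out the substitution that the paper labels ``easily seen''.
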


\begin{proof}
If \eqref{Bucharest_cor2_eq1} is satisfied, then it is easily seen that the mapping \eqref{Bucharest_eq2} is symmetric, hence the assertion follows by Corollary~\ref{Bucharest_cor1}. 
\end{proof}

\begin{remark}
\normalfont 
Let us further note that the multiplication \eqref{multiplication-1} under appropriate assumptions defines the a left-symmetric algebra on $L$, i.e., an algebra whose associator is symmetric in the first two arguments 
$$
(\ell_1,\ell_2,\ell_3)=(\ell_2,\ell_1,\ell_3)
$$
for any $\ell_1,\ell_2,\ell_3\in L$.
The same is true for the multiplication \eqref{multiplication-2}.
Specific examples of such algebras are given in Corollary~\ref{first} under condition \eqref{first0_eq1}, as well as in Corollary~\ref{Bucharest} under condition \eqref{Bucharest_cor2_eq1}. Such algebras have applications in integrable systems theory and are related to the notion of the classical $r$-matrix, see \cite{Bai04}.
\end{remark}

\begin{example}
\label{A_ex}
\normalfont
In the special case $\KK=\RR$ and $R=\Ci(\RR)$, the above equations \eqref{first0_eq1}~and~\eqref{Bucharest_cor2_eq1} can be expressed in terms of ordinary differential equations. 
In fact, for arbitrary $g\in \Ci(\RR)$, let us consider the vector field $D_g:=g\frac{\de}{\de t}\in\Der(\Ci(\RR))$, 
where we denote by $t$ the coordinate in~$\RR$. 
Then for all $f,g\in \Ci(\RR)$ we have 
\begin{equation}
\label{A_ex_eq1}
[D_f,D_g]=D_{fg'-f'g}.
\end{equation}
If moreover $g\in\Ci(\RR)$ satisfies $g(t)\ne0$ for every $t\in\RR$, then $1/g\in\Ci(\RR)$ hence 
for every $f\in\Ci(\RR)$ we have  
\begin{equation}
\label{A_ex_eq2}
D_{fg'-f'g}=(fg'/g-f')D_g. 
\end{equation}
For $X:=D_1=\frac{\de}{\de t}$, $Y:=D_g$ with $g\in\Ci(\RR)$, and $c\in\Ci(\RR)$,
the equation~\eqref{Bucharest_cor2_eq1} is equivalent via \eqref{A_ex_eq1} to the ordinary differential equation
\begin{equation}
	\label{A_ex_eq3}
g''(t)=c(t)g(t)\text{ for all }t\in\RR.
\end{equation}
Thus, in the special case $c(t)=t$ for all $t\in\RR$ we obtain Airy's differential equation (e.g., \cite[\S 11.1]{Ol97}), 
and it follows that the Airy functions give rise to new Lie algebroid structures on the cotangent bundle of the real line~$\RR$, using Corollary~\ref{Bucharest_cor2}.

Similarly, the equation~\eqref{first0_eq1} is equivalent to the ordinary differential equation 
\begin{equation}
	\label{A_ex_eq4}
	g'(t)=c(t)g(t)\text{ for all }t\in\RR.
\end{equation}
It follows by the uniqueness of the solutions to Cauchy problems for first order linear ordinary differential equations (e.g., \cite[Prop. 1.2.4]{Ho03}) that if $g\in\Ci(\RR)$ satisfies \eqref{A_ex_eq4} and there exists $t_0\in\RR$ with $g(t_0)=0$, then $g(t)=0$ for every $t\in\RR$. 
On the other hand, it is well known that the Airy functions ${\rm Ai}$ and ${\rm Bi}$ 
do have zeros, namely in the interval $(-\infty,0)$. 
(See e.g., their graphs in \cite[\S 11.1.3]{Ol97}.) 
Therefore, if $g$ is any of the functions  ${\rm Ai}$ and ${\rm Bi}$, then there is no function $c\in \Ci(\RR)$ satisfying \eqref{A_ex_eq4}, or, equivalently, $[X,Y]=cY$ for 
$X=D_1=\frac{\de}{\de t}$ and $Y=D_g$. 
However,  as noted above, we have \eqref{A_ex_eq3} for $c(t)=t$ for all $t\in\RR$, 
hence $[X,[X,Y]]=cY$ for 
$X=D_1=\frac{\de}{\de t}$ and $Y=D_g$. 
Thus, in this concrete situation, Corollary~\ref{first0} is not applicable, while Corollary~\ref{Bucharest_cor2} is.

For the $R$-module $\Omega^1_R$ the Lie bracket \eqref{Bucharest_cor1_eq1} has the form
\begin{equation}
[f\de t, h\de t]=-g(t)\mathcal{D}_{H}(f,h)(t)\de t \nonumber
\end{equation}
for all $f,h\in  \Ci(\RR)$, where $\Dc_{H}$ denotes Hirota's operator (see \cite{To89}) which acts on the pair of functions $f$ and $h$ in the following way
\begin{equation}
\mathcal{D}_{H}(f,h)(t):=\left( \frac{\de}{\de t} - \frac{\de}{\de \tilde{t}} \right) f(t)h(\tilde{t})\bigg|_{t=\tilde{t}}=f'(t)h(t)-f(t)h'(t). \nonumber
\end{equation}
This operator is used in the method of finding soliton solutions for non-linear equations, as for the example KdV.

\end{example}

\begin{remark}
\normalfont
	Lie-Rinehart algebra structures on the space of differential forms $\Omega^1_R$ have been constructed before in the algebraic theory of Dirac structures, motivated by the study of integrability of certain nonlinear differential equations. 
		See \cite[Ex. 2.3 and Th. 2.12]{Do93}, where the Hamiltonian operator $H$ plays the role of the anchor. 		
		From this perspective, we note that the Lie-Rinehart brackets constructed in Corollaries \ref{first} and \ref{Bucharest_cor1} above do not seem to be directly obtainable from the formalism of Dirac structures. 
		For instance, in the setting of Corollaries \ref{first}~and ~\ref{Bucharest}, the graph of the anchor map 
		$$H:=\rho\colon\Omega^1_R\to T_R, \quad H(\alpha):=\langle \alpha,Y\rangle X,$$ 
 is 
 $$\Gamma:=\{(\langle \alpha,Y\rangle X, \alpha)| \alpha\in \Omega^1_R\}\subseteq T_R\times \Omega^1_R$$
 whose orthogonal complement with respect to the bilinear map 
 $$\langle\cdot,\cdot\rangle\colon (T_R\times \Omega^1_R)\times (T_R\times \Omega^1_R)\to R, \quad 
 \langle(Z_1,\alpha_1),(Z_2,\alpha_2)\rangle:=\langle\alpha_1,Z_2\rangle+\langle\alpha_2,Z_1\rangle$$
 satisfies  
 $\Gamma^{\perp}\supseteq\{(-\langle \alpha,X\rangle Y, \alpha)| \alpha\in \Omega^1_R\}$. 
 (Here we actually have equality if the pairing $\Omega^1_R\times T_R\to R$ is nondegenerate.)
 Thus, in general, we have $\Gamma\ne\Gamma^\perp$, hence $\Gamma$ is not a Dirac structure 
 in the sense of \cite[\S 2.3]{Do93}, which in turn shows that the above anchor $\rho$ is not a Hamiltonian operator in the sense of \cite[\S 2.6]{Do93}.
 This is also reflected by the fact that 
 the expression  $\langle\rho(\de r_1),\de r_2\rangle =Y(r_1)X(r_2)$, 
	is not skew symmetric with respect to $r_1,r_2\in R$, unlike the Poisson bracket in \cite[Eq. (2.21)]{Do93}.  
\end{remark}

\subsection*{Acknowledgment}

The first-named author D.B. acknowledges partial financial support from 
the Research Grant GAR 2023 (code 114), supported from the Donors' Recurrent Fund of the Romanian Academy, managed by the "PATRIMONIU" Foundation. 
The third-named author G.J. was partially supported by National Science Centre, Poland project 2020/01/Y/ST1/00123.

\end{document}